\newtheorem{theorem}{Theorem}[section]
\newtheorem{proposition}[theorem]{Proposition}
\newtheorem{lemma}[theorem]{Lemma}
\newtheorem{corollary}[theorem]{Corollary}
\newtheorem{remark}[theorem]{Remark}
\numberwithin{equation}{section}
\numberwithin{theorem}{section}
\newcommand{\mc}[1]{{\mathcal #1}}
\newcommand{\bb}[1]{{\mathbb #1}}
\newcommand{\eps}{\varepsilon}
\newcommand{\Glimsup}{\mathop{\textrm{$\Gamma\!$--$\varlimsup$}}\limits}
\newcommand{\Gliminf}{\mathop{\textrm{$\Gamma\!$--$\varliminf$}}\limits}
\newfont{\indic}{bbmss12}
\def\un#1{\hbox{{\indic 1}$_{#1}$}}
\begin{document}

\title[Fractional conservation laws]
{A variational approach to the
  inviscous limit of fractional conservation laws}

\author[M.\ Mariani]{Mauro Mariani} %
\address{Mauro Mariani, Laboratoire d'Analyse, Topologie,
  Probabilit\'es (CNRS UMR 6632), Universit\'e Aix-Marseille 3,
  Facult\'e des Sciences et Techniques Saint-J\'er\^ome, Avenue
  Escadrille Normandie-Niemen 13397 Marseille Cedex 20, France.} %
\email{mariani@cmi.univ-mrs.fr} %
\author[Y.\ Sire]{Yannick Sire} %
\address{Yannick Sire, Laboratoire d'Analyse, Topologie,
  Probabilit\'es (CNRS UMR 6632), Universit\'e Aix-Marseille 3,
  Facult\'e des Sciences et Techniques Saint-J\'er\^ome, Avenue
  Escadrille Normandie-Niemen 13397 Marseille Cedex 20, France.} %
\email{sire@cmi.univ-mrs.fr} %

\begin{abstract}
  We are concerned with a control problem related to the vanishing
  \emph{fractional} viscosity approximation to scalar conservation
  laws. We investigate the $\Gamma$-convergence of the control cost
  functional, as the viscosity coefficient tends to zero.
 \end{abstract}
\maketitle

\section{Introduction}
\label{s:1}
\subsection{Optimal control for conservation laws}
We are concerned with the scalar one-dimensional conservation law
\begin{equation}
  \label{e:1.1}
   \partial_t u+\partial_x f(u)=0 
\end{equation}
where the time variable $t$ runs on a given interval $[0,T]$, the
space variable $x$ runs, for the sake of simplicity, on a one
dimensional torus $\bb T$, and $u= u(t,x)$. Even if the initial datum
$u(0)= u(0,\cdot)$ is smooth, the flow \eqref{e:1.1} may develop
singularities, so that in general no classical smooth solutions
exist. On the other hand, if $f$ is nonlinear, there are in general
infinitely many weak solutions to the Cauchy problem associated to
\eqref{e:1.1}. Existence and uniqueness of the solution are then
recovered by imposing the so-called entropy condition. A celebrated
result by Kruzhkov states the uniqueness of the entropy solution to
the Cauchy problem associated to \eqref{e:1.1}. Such an entropic
solution can be obtained as limit of various approximations of the
flow \eqref{e:1.1}; namely the entropy solution is the relevant
one. We refer to \cite{Da,Pe,Se} for general theory of conservation
laws.

In particular, see \cite{DI}, the entropy solution to \eqref{e:1.1}
can be recovered as the limit as $\eps \to 0$ of
solutions to
\begin{equation}
  \label{e:1.2}
   u_t+\partial_x f(u)= - \frac{\eps}2 (-\partial_{xx})^s u
\end{equation}
where $1/2 < s \le 1$, $(-\partial_{xx})^s$ denotes the $s$-th power
of the negative Laplacian. In this paper a more general variational
approach to the above problem will be addressed. Indeed \eqref{e:1.2}
is a model for nonlinear transport-diffusion phenomena in a media
allowing long-range correlations. The action of an external field $E$
on the system modifies \eqref{e:1.2} to
\begin{equation}
  \label{e:1.3}
   \partial_t u + \partial_x f(u)=- \frac{\eps}2 (-\partial_{xx})^s u +
   (-\partial_{xx})^{s/2} E
\end{equation}
while the work done by the external field $E$ equals $\frac 12
\|E\|_{L_{2}}^{2}$.  A control problem is then naturally introduced by
defining, for $\eps>0$, the functional (see Section~\ref{s:2} for a
more precise definition)
\begin{equation}
\label{e:I1}
I_{\eps}(u):= \inf_{E}  \frac{\eps^{-1}}2 \|E\|_{L_{2}(dt,dx)}^{2}
\end{equation}
where the infimum is carried over the fields $E$ such that
\eqref{e:1.3} holds. We then investigate the variational convergence
of $I_\eps$.

\subsection{A Statistical Mechanics interpretation}
Roughly speaking, \eqref{e:1.2} can be interpreted as the
\emph{typical} evolution behavior of a density (e.g.\ a density of
charge) in a media with long-range correlations. Then \eqref{e:1.3}
describes the same density when a random fluctuation $E$ is
introduced, while $\|E\|_{L_2}^2$ gives a weight to how much unlikely
the fluctuation is. Thus, $I_\eps(u)$ quantifies how unlikely is to
observe a density $u$, when the typical behavior is given by
\eqref{e:1.2}. By the end, one may interpret $I_\eps$ as a free energy
of the system, and thus one would be interested to understand the
typical behavior of the infima of $I_\eps$ over good sets, as $\eps
\to 0$.

Indeed the $\eps \to 0$ limit corresponds, in this Statistical
Mechanics' description, to a hydrodynamical limit, so that the
variational limit of $I_\eps$ should play the role of a free energy
for the limiting (macroscopic) system. While this picture has been
investigated in several models, see \cite{Spohn}, and connections of
microscopic, mesoscopic and macroscopic descriptions have been
sometimes established rigorously, most of the literature concerns
diffusive systems, where the limiting macroscopic behavior is of
parabolic type. A major open problem concerns fluctuations of systems
with an hydrodynamics given by non-linear transport evolutions as
\eqref{e:1.1}. No result at all concerning nonlocal (that is,
long-range) fluctuations of hyperbolic systems is known to the
authors.

It is well known that, in order to investigate the limit of infima of
a sequence $I_\eps$ of functional, the notion of $\Gamma$-convergence,
see Section~\ref{ss:gc}, is the relevant one. In \cite{BBMN} the
$\Gamma$-convergence of $I_\eps$ is investigated in the "local" case
$s=1$ corresponding to short-range correlations. In \cite{Mauro} such
results are rigorously connected to the description of large
deviations for stochastic PDEs modeling stochastic particles
systems. In both papers, two different scalings are considered,
corresponding to the $\Gamma$-limits of $\eps I_\eps$ and of
$I_\eps$. In this paper we only address the latter problem, the
$\Gamma$-limit of $I_\eps$. Indeed, only in this latter scaling the
vanishing diffusive term $\eps (-\partial_{xx})^s u$ is expected to play a different role than the standard Laplacian.

The main results here established are the following. A functional $I
\colon L_p([0,T]\times \bb T) \to [0,+\infty]$ is introduced, see
\eqref{e:Ie}. $I(u)$ is set to be $+\infty$ if $u$ is not a weak
solution to \eqref{e:1.1}, while if $u$ solves \eqref{e:1.1}, $I(u)$
quantifies how much the entropic condition of \eqref{e:1.1} is
violated by $u$. In Theorem~\ref{t:ecne}-(i) we prove that if $u_\eps
\to u$ in $L_p$, then $\varliminf I_\eps(u_\eps)\ge I(u)$, a so-called
$\Gamma$-liminf inequality. In Theorem~\ref{t:ecne}-(ii), we prove
that $I_\eps$ is an equicoercive sequence in the strong $L_p$
topology. The two results imply that if $\mc C$ is closed in $L_p$,
then $$\varliminf_\eps \inf_{u \in \mc C} I_\eps(u)\ge \inf_{u \in \mc C} I(u).$$In order
to characterize the $\Gamma$-limit of $I_\eps$, one would need to
establish a $\Gamma$-limsup inequality. This step is missing even in
the local case $s=1$, mainly because of open issues concerning
chain-rules for non BV fields, see \cite{BBMN,DOW}. We thus do not
tackle the problem here, but rather give a qualitative hint that may
suggest $I$ to be the $\Gamma$-limit (and not just an upper bound of
the $\Gamma$-limit ) of $I_\eps$. Indeed, in Theorem~\ref{t:quasipot}
we explicitly calculate the quasipotential of $I_\eps$, a proper way
to describe the long time asymptotic of a functional, and prove it to
be independent of $\eps$ and equal to the quasipotential of $I$. To
put it shortly, the $\eps \to 0$ limit and $T \to +\infty$ limit
commute, if one assumes $I$ to be the $\Gamma$-limit of $I_\eps$.

The functional $I$ was already introduced in \cite{BBMN}. The result
in this paper then asserts that the limiting fluctuations are the same
if $1/2<s<1$ or $s=1$. $I$ thus appears to be a solid candidate as
the proper generalization of the functional introduced by Jensen and
Varadhan in a stochastic particles setting (see e.g.\ \cite{Ragu} for
a summary of their results).

\subsection{Outline and generalizations} 
Beyond considering the non-local case $s<1$, two further technical difficulties are addressed in this paper with respect to \cite{BBMN}. First, we allow unbounded densities $u \in L_p([0,T]\times \bb T)$ (while $u$ was a priori restricted to take values in $[0,1]$ in \cite{BBMN}), and we fix an initial datum $u_0$ (whereas no initial condition was given in \cite{BBMN}). However, since we only achieve $L_2$ H\"older a priori bounds on $u$, we need $f$ to be uniformly Lipschitz (to make integrals meaningful) and $p<2$ (to assure a needed uniform integrability in the topology considered).

From a technical point of view, the key proofs are achieved by heavily using the Caffarelli-Silvestre \cite{CS} representation of the fractional Laplacian operator, as opposed on the $s=1$ case where of course only local evaluations were needed.

The paper is organized as
follows. In Section~\ref{s:2} the main results are stated. In
Section~\ref{s:3} some useful properties of the fractional Laplacian
are recalled. In Section~\ref{s:4} we establish the basic estimates
needed in Section~\ref{s:5}, where the main results about
$\Gamma$-convergence are proved. Section~\ref{s:6} is devoted to the
proof of Theorem~\ref{t:quasipot}, characterizing the quasipotential.

We remark that we tried to keep the setting as readable as
possible. Some generalizations are possible by using the same
techniques of the paper. First, one can flawlessly change the torus
$\bb T$ with the real line $\bb R$. Secondly, one may study the
problem in higher dimensions: all the proofs go through but
Theorem~\ref{t:ecne}-(ii) which needs to be addressed by the means of
averaging lemmas in this case, see \cite[Chap.~5]{Pe}, and thus requires
stronger hypotheses on $f$.

\section{Preliminaries and main results}
\label{s:2}
Let $\bb T=\bb R/\bb Z$ be the one dimensional torus and let $\langle
\cdot,\cdot \rangle$ denote duality in $L_2(\bb T)$. Hereafter $T>0$
is fixed, $\partial_t$ denotes derivative with respect to the time
variable $t\in [0,T]$, $\partial_x$ derivative with respect to the
space variable $x\in \bb T$.

Let $C\big([0,T]; H^{-1}(\bb T)\big)$ be endowed with
its natural metric
\begin{equation}
\label{e:2.2}
  \mathrm{distance}(u,v):= 
  \sup \big\{ 
              \langle u(t)-v(t), \varphi \rangle,\, 
              t\in [0,T],\,\|\varphi\|_{L_2(\bb T)}^2 
               + \|\partial_x \varphi\|_{L_2(\bb T)}^2 \le 1 
         \big\}
\end{equation}
Fix once and for all $p \in [1,2[$, and let $\mc X:= C\big([0,T];
H^{-1}(\bb T)\big) \cap L_p([0,T]\times \bb T)$ be endowed with the
refinement of the $C\big([0,T]; H^{-1}(\bb T)\big)$ and the
\emph{strong} $L_p$ metrics.

Let moreover $H_s(\bb T)$, $\dot H_s(\bb T)$ be the fractional Sobolev
space and the homogeneous fractional Sobolev space of exponent
$s>0$. $H_{-s}(\bb T)$ and $\dot H_{-s}(\bb T)$ denotes their dual
spaces. Notice that with this notation $ \dot H_{-s} = \big\{
(-\partial_{xx})^s h,\,h\in \dot H_s(\bb T)\big\}$. We finally
introduce the space $\mc H= L_2\big([0,T]; \dot H_{s}(\bb T)\big)$ and
its dual $\mc H^*= L_2\big([0,T]; \dot H_{-s}(\bb T)\big)$. We use the
standard notation for the norms, for instance $\|g\|_{\mc
  H^*}^2=\int_0^T \big|(-\partial_{xx})^{s/2}h(t)\big|^2\,dt$ if
$g(t)=(-\partial_{xx})^s h(t)$ for a.e.\ $t$, while $\|g\|_{\mc
  H^*}=+\infty$ if $g \not \in \mc H^*$.

\subsection{Fractional parabolic cost functional}
We assume the flux $f$ to be bounded and Lipschitz, the initial datum
$u_0 \in L_2(\bb T)$, and the exponent $s>1/2$. For $\eps>0$ the
functional $I_\eps \colon \mc X \to [0,+\infty]$ is defined as
\begin{equation}
\label{e:Ie}
I_\eps(u):=
\begin{cases}
\frac{\eps^{-1}}2 \big\|\partial_t u+ \partial_x f(u) 
              +\frac{\eps}2 (-\partial_{xx})^s u \big\|_{\mc H^*}^2
& \text{if $u \in \mc H \cap C([0,T];L_2(\bb T))$}
\\
%& \text{and $\partial_t u+ \partial_x f(u) 
%              +\frac{\eps}2 (-\partial_{xx})^s u  \in \mc H^*$}
%\\
& \text{and $u(0,x)=u_{0}(x)$}
\\
+\infty & \text{otherwise}
\end{cases}
\end{equation}

The following proposition provides a characterization of $I_\eps$ as
the cost functional of the optimal control problem introduced in
Section~\ref{s:1}.
\begin{remark}
\label{r:riesz}
If $u\in \mc X$ is such that $I_{\eps}(u)<\infty$, then there exists a
unique $\Phi \equiv \Phi_u \in \mc H$ such that the equation
\begin{equation}
\label{e:control}
\partial_t u + \partial_x f(u)+\frac{\eps}2 (-\partial_{xx})^s u 
    = (-\partial_{xx})^s \Phi
\end{equation}
holds weakly, when checked against test functions in
$C^{\infty}([0,T]\times \bb T)$. Moreover
\begin{equation}
\label{e:Irep}
I_\eps(u):= 
     \frac{\eps^{-1}}2 \|\Phi\|_{\mc H}^2
\end{equation}
\end{remark}

The next proposition states that $I_\eps$ is a \emph{good} functional,
namely that its sublevel sets are compact, a standard requirement for
cost functionals. In particular it states that the condition $u\in \mc
H \cap C([0,T];L_2(\bb T))$ is the natural one to impose in the
definition of the domain of $I_\eps$, see \eqref{e:Ie}. For instance,
one would not in general have a lower-semicontinuous functional if
higher regularity would be required on $u$, while the representation
in Remark~\ref{r:riesz} would not hold for weaker regularity or indeed
if $s<1/2$.
\begin{proposition}
\label{p:semicont}
$I_\eps$ is a coercive lower-semicontinuous functional on $\mc X$.
\end{proposition}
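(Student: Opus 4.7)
The plan is to prove both statements through an energy estimate obtained by testing \eqref{e:control} against $u$ itself. For $u \in \mathrm{dom}(I_\eps)$ with associated field $\Phi = \Phi_u$ from Remark~\ref{r:riesz}, such a pairing is legitimate because, under the hypothesis $s > 1/2$, each term in \eqref{e:control} belongs to $\mc H^* = L_2([0,T]; \dot H_{-s}(\bb T))$ while $u \in \mc H$; the Lions lemma yields
\[
\tfrac12 \|u(t)\|_{L_2}^2 + \tfrac{\eps}{2}\int_0^t \|u(\tau)\|_{\dot H_s}^2\, d\tau
= \tfrac12 \|u_0\|_{L_2}^2 + \int_0^t \big\langle (-\partial_{xx})^{s/2}\Phi(\tau),\,(-\partial_{xx})^{s/2} u(\tau)\big\rangle\, d\tau,
\]
because the convection contribution $\int_{\bb T} \partial_x f(u)\, u\, dx$ vanishes: the one-dimensional Sobolev embedding $\dot H_s \hookrightarrow L_\infty(\bb T)$ for $s > 1/2$ makes $f(u)$ bounded, and a mollification argument based on the Lipschitz antiderivative of $f$ delivers the desired identity. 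Cauchy--Schwarz and Young's inequality then convert this into
\[
\|u\|_{L_\infty([0,T];L_2)}^2 + \tfrac{\eps}{2}\,\|u\|_{\mc H}^2 \le \|u_0\|_{L_2}^2 + \tfrac{2}{\eps}\|\Phi_u\|_{\mc H}^2.
\]

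For coercivity, let $(u_n) \subset \mc X$ satisfy $I_\eps(u_n) \le C$ with fields $\Phi_n$. The estimate above bounds $u_n$ uniformly in $\mc H \cap L_\infty([0,T]; L_2(\bb T))$, and rewriting \eqref{e:control} as $\partial_t u_n = -\partial_x f(u_n) - \tfrac{\eps}{2}(-\partial_{xx})^s u_n + (-\partial_{xx})^s \Phi_n$ shows that $\partial_t u_n$ is bounded in $L_2([0,T]; H^{-1}(\bb T))$: the convection term lies in $L_\infty([0,T]; H^{-1})$ by boundedness of $f$, and the other two in $\mc H^* \hookrightarrow L_2([0,T]; H^{-1})$ thanks to $s \le 1$. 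An Aubin--Lions argument then gives relative compactness in $L_2([0,T]\times \bb T)$, hence in $L_p([0,T]\times \bb T)$ since $p<2$ and the domain has finite measure. The same bound on $\partial_t u_n$ provides $C^{0,1/2}([0,T]; H^{-1})$ equicontinuity which, combined with Rellich compactness $L_2(\bb T) \hookrightarrow H^{-1}(\bb T)$, yields compactness in $C([0,T]; H^{-1})$. Altogether $(u_n)$ is relatively compact in $\mc X$.

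For lower-semicontinuity, let $u_n \to u$ in $\mc X$ with $\liminf_n I_\eps(u_n) < \infty$ and pass to a subsequence realizing this $\liminf$. The bounds above yield $u_n \rightharpoonup u$ in $\mc H$ and, after further extraction, $\Phi_n \rightharpoonup \Phi$ in $\mc H$ for some $\Phi$. One then passes to the limit distributionally in \eqref{e:control}: strong $L_p$ convergence and boundedness of $f$ give $f(u_n) \to f(u)$ in $L_p$ by dominated convergence along an a.e. subsequence, hence $\partial_x f(u_n) \to \partial_x f(u)$ in $\mc D'$, while the weak $\mc H$ convergences dispatch the linear terms. Convergence in $C([0,T]; H^{-1})$ and $u_n(0) = u_0$ secure $u(0) = u_0$, and standard parabolic regularity for the limiting equation (forcing in $\mc H^*$, datum in $L_2$) places $u$ in $C([0,T]; L_2(\bb T))$, so $u \in \mathrm{dom}(I_\eps)$. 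Uniqueness in Remark~\ref{r:riesz} identifies $\Phi$ with $\Phi_u$, and weak lower-semicontinuity of $\|\cdot\|_{\mc H}$ yields $I_\eps(u) \le \liminf_n I_\eps(u_n)$.

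The delicate step of this outline is the energy identity: at the mere $\mc H \cap L_\infty(L_2)$ regularity, both the vanishing of $\int_{\bb T} \partial_x f(u)\, u\, dx$ and the legitimacy of testing \eqref{e:control} against $u$ require a fractional chain rule, which is exactly where the hypothesis $s > 1/2$ enters critically through $\dot H_s \hookrightarrow L_\infty(\bb T)$ in one dimension. Everything else is fairly standard linear functional analysis once this identity is established.
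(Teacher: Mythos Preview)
Your proof is correct and shares the underlying energy estimate with the paper (this is the paper's Lemma~4.1), but the two arguments diverge once that estimate is in hand.

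For coercivity, you invoke Aubin--Lions with the triple $H^s \hookrightarrow\hookrightarrow L_2 \hookrightarrow H^{-1}$, which is clean and textbook. The paper instead proves strong $L_2$ compactness by hand: it splits $v^n - v$ via a spatial mollifier $\jmath$ and controls the commutator term $v^n - \jmath * v^n$ explicitly through the $\mc H$-bound, using that $\|\imath\|_{\dot H^{1-s}} \to 0$ as $\jmath \to \delta_0$ precisely because $1-s < 1/2$. Your route is shorter; the paper's makes the role of $s>1/2$ in compactness more visible.

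For lower semicontinuity, you extract weak limits of $\Phi_n$, pass to the limit distributionally in \eqref{e:control}, verify $u \in \mathrm{dom}(I_\eps)$ via the Lions lemma, and conclude by weak lower semicontinuity of $\|\cdot\|_{\mc H}$. The paper instead writes $I_\eps$ restricted to $C([0,T];L_2)\cap \mc H$ as a supremum of $\mc X$-continuous affine functionals (formula \eqref{e:Irap}), and observes that the a priori bounds force any limit point into that set. This representation avoids the subsequence extraction and the identification $\Phi=\Phi_u$ altogether, at the cost of first establishing \eqref{e:Irap}. Both arguments are standard; the paper's is slightly more self-contained, yours more modular.

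One small quibble: your remark that ``$\dot H_s \hookrightarrow L_\infty$ makes $f(u)$ bounded'' is beside the point, since $f$ is assumed bounded from the outset. The real content of $s>1/2$ in the energy identity is that the pairing $\langle f(u),\partial_x u\rangle$ is well defined (with $f(u)\in H^s$ by Lipschitz composition and $\partial_x u \in H^{s-1}$) and that smooth approximation in $H^s$ is stable enough to inherit the identity $\langle f(u_k),\partial_x u_k\rangle = 0$ from the smooth case. This is what the paper does implicitly, and your ``mollification argument based on the Lipschitz antiderivative'' points at the same mechanism.
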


\subsection{$\Gamma$-convergence}
\label{ss:gc}
As well known, a most useful notion of variational convergence is the
$\Gamma$-convergence which, together with some compactness estimates,
implies convergence of the minima. Recall that a sequence $(F_\eps)$
of functionals $F_\eps \colon \mc X \to [0,+\infty]$ is
\emph{equicoercive} on $\mc X$ iff for each $M >0$ there exists a
compact set $K_M$ such that $\varlimsup_{\eps \downarrow 0 } \{x \in
\mc X \,:\: F_\eps(x) \le M \} \subset K_M$. We briefly recall the
basic definitions of the $\Gamma$-convergence theory, see e.g.\
\cite{Br}. Given $x \in \mc X$ we define
\begin{eqnarray*}
\big( \Gliminf_{\eps \to 0} F_\eps \big)\, (x) &:= &
    \inf \big\{ \varliminf_{\eps \to 0} F_\eps(x^\eps),\,
                   \{x^\eps\} \subset X\,:\: x^\eps \to x \big\}
\\
\big(\Glimsup_{\eps \to 0} F_\eps \big)\, (x) &:= & 
\inf \big\{\varlimsup_{\eps \to 0}  F_\eps(x^\eps),\,
                   \{x^\eps\} \subset X\,:\: x^\eps \to x \big\}
\end{eqnarray*}
Whenever $\Gliminf_\eps F_\eps =\Glimsup_\eps F_\eps=F$ we say that
$F_\eps$ $\Gamma$-converges to $F$ in $\mc X$. Equivalently, $F_\eps$
$\Gamma$-converges to $F$ iff for each $x\in \mc X$:
\begin{itemize}
\item[\rm{--}]{for any sequence $x^\eps\to x$ we have $\varliminf_\eps
    F_\eps(x^\eps)\ge F(x)$ \ (\emph{$\Gamma$-liminf inequality});}
\item[\rm{--}]{ there exists a sequence $x^\eps\to x$ such that
    $\varlimsup_\eps F_\eps(x^\eps)\le F(x)$ \ (\emph{$\Gamma$-limsup
      inequality}).}
\end{itemize}
Equicoercivity and $\Gamma$-convergence of a sequence $(F_\eps)$ imply
an upper bound of infima over open sets, and a lower bound of infima
over closed sets, see e.g.\ \cite[Prop.~1.18]{Br}, and therefore it is
the relevant notion of variational convergence in the control setting
introduced in \eqref{e:control}.

\subsection{Solutions to scalar conservation law} 
In order to describe the candidate $\Gamma$-limit of $I_\eps$, further
preliminaries are introduced in this section.

An element $u\in \mc X$ is a \emph{weak solution} to \eqref{e:1.1}
with initial condition $u_0 \in L_2(\bb T)$ iff for each $\varphi\in
C^\infty_{\mathrm{c}}\big([0,T[\times \bb T \big)$ it satisfies
\begin{equation*}
  - \int_0^t \langle u(r),\partial_t \varphi(r) \rangle 
  - \langle f(u(r)),\partial_x \varphi(r) \rangle\,dr 
  - \langle u_0 , \varphi(0)\rangle
= 0
\end{equation*}

We denote by $C^2_{\mathrm{b}}(\bb R)$ the set of twice differentiable
functions with bounded second derivative.  A function $\eta \in
C^2_{\mathrm{b}}(\bb R)$ is called an \emph{entropy} and its
\emph{conjugated entropy flux} $q\in C^2_{\mathrm{b}}(\bb R)$ is
defined, up to an additive constant, by
\begin{equation*}
q(w):=\int^w\!dv\,\eta'(v)f'(v)
\end{equation*}
For a weak solution $u$ to \eqref{e:1.1}, for an entropy -- entropy
flux pair $(\eta,q)$, the \emph{$\eta$-entropy production} is the
distribution $\wp_{\eta,u}$ acting on
$C^\infty_{\mathrm{c}}\big(]0,T[\times\bb T\big)$ as
\begin{equation}\label{eq:entropyprod}
 \wp_{\eta,u}(\varphi):= 
    - \int_0^T \langle \eta(u(r)) , \partial_t \varphi(r)\rangle
    + \langle q(u(r)) , \partial_x \varphi(r)\, \rangle\, dr
\end{equation}

The next proposition introduces a suitable class of solutions to
\eqref{e:1.1}. Its proof is given in \cite[Prop.~2.3]{BBMN}, by
adapting \cite[Prop.~3.1]{DOW}.  We denote by $M\big(]0,T[\times \bb T
\times \bb R\big)$ the set of Radon measures on $]0,T[ \times \bb T
\times \bb R$. In the following, for $\varrho \in M\big(]0,T[\times
\bb T \times \bb R\big)$ we denote by $\varrho^\pm$ the positive and
negative part of $\varrho$.
\begin{proposition}
\label{p:kin}
Let $u \in \mc X$ be a weak solution to \eqref{e:1.1}. The following
statements are equivalent:
\begin{itemize}
\item[{\rm (i)}]{for each entropy $\eta$, the $\eta$-entropy
    production $\wp_{\eta,u}$ can be extended to a Radon measure on
    $]0,T[\times \bb T$;
}
\item[{\rm (ii)}]{there exists $\varrho_u(dv,dt,dx) \in M\big(\bb R
    \times ]0,T[\times \bb T\big)$, such that for any entropy $\eta$
    and $\varphi \in C^\infty_{\mathrm{c}}\big(]0,T[\times\bb T\big)$
\begin{equation}
\label{e:2.13}
\wp_{\eta,u}(\varphi) = 
\int \varrho_u(dv,dt,dx)\,\eta''(v)\varphi(t,x).
\end{equation}
}
\end{itemize}
\end{proposition}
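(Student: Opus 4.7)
The plan is to prove the two directions of this equivalence separately. The implication (ii) $\Rightarrow$ (i) is immediate: for $\eta \in C^2_{\mathrm{b}}(\bb R)$ and $\varphi \in C^\infty_{\mathrm{c}}(]0,T[\times \bb T)$ supported in a compact $K$, the formula \eqref{e:2.13} gives
\[
|\wp_{\eta,u}(\varphi)| \;\le\; \|\eta''\|_\infty \, \|\varphi\|_\infty \, |\varrho_u|(\bb R \times K),
\]
so $\wp_{\eta,u}$ extends to a Radon measure on $]0,T[ \times \bb T$ (implicitly $\varrho_u$ is such that $|\varrho_u|(\bb R \times K) < \infty$, a finiteness condition recovered from the construction in the other direction).

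For the main direction (i) $\Rightarrow$ (ii), the starting point is the observation that $\eta \mapsto \wp_{\eta,u}$ is linear in $\eta$ and that $\wp_{\eta,u}(\varphi) = 0$ whenever $\eta$ is affine and $\varphi \in C^\infty_{\mathrm{c}}(]0,T[ \times \bb T)$: in that case \eqref{eq:entropyprod} collapses to the weak formulation of \eqref{e:1.1} against a test function compactly supported in time, hence vanishes. I therefore parametrize entropies by $\psi := \eta''$, setting $\eta_\psi(v) := \int_0^v (v-w)\psi(w)\,dw \in C^2_{\mathrm{b}}(\bb R)$ for $\psi \in C_{\mathrm{c}}(\bb R)$, and introduce the bilinear form
\[
T(\psi, \varphi) := \wp_{\eta_\psi, u}(\varphi).
\]
Assumption (i) says that $T(\psi, \cdot)$ is a Radon measure on $]0,T[ \times \bb T$ for every $\psi$; the goal is to promote $T$ to a joint Radon measure $\varrho_u$ on $\bb R \times ]0,T[ \times \bb T$.

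The key intermediate estimate is continuity of $\psi \mapsto T(\psi, \varphi)$ on $C_{\mathrm{c}}(\bb R)$ with the sup-norm topology, restricted to $\psi$'s with support in a fixed compact $[-R,R]$. This follows from \eqref{eq:entropyprod} by dominated convergence: if $\psi_n \to \psi$ uniformly with $\mathrm{supp}\,\psi_n \subset [-R,R]$, then $\eta_{\psi_n}$ converges to $\eta_\psi$ locally uniformly with linear growth controlled uniformly in $n$, and the associated fluxes $q_{\psi_n}$ obey the same type of bound (since $q_\psi' = \eta_\psi' f'$ and $f$ is Lipschitz), allowing passage to the limit against $u \in L_p \subset L_1$. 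Combined with the uniform boundedness principle applied to the family of Radon-measure-valued functionals $\{\varphi \mapsto T(\psi, \varphi) : \|\psi\|_\infty \le 1,\,\mathrm{supp}\,\psi \subset [-R,R]\}$, this upgrades separate continuity of $T$ to a uniform bound on each compact product $[-R,R] \times K$. A density argument (Stone--Weierstrass on $C(K_1 \times K_2)$) then represents $T$ as integration against a Radon measure $\varrho_u$ on $\bb R \times ]0,T[ \times \bb T$.

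Finally, \eqref{e:2.13} is extended from $\eta_\psi$ with $\psi \in C_{\mathrm{c}}(\bb R)$ to general $\eta \in C^2_{\mathrm{b}}(\bb R)$ by cutting off $\eta''$ with a bump $\chi_R \to 1$ and passing to the limit in both sides, using Lipschitz control on $f$ and the $L_p$-bound on $u$. The main obstacle is the tensor-product/kernel step: showing that the separately-Radon bilinear form $T$ can be realized by a single joint Radon measure, and simultaneously ensuring enough integrability in the $v$-variable so that \eqref{e:2.13} is meaningful for all bounded $\eta''$ and not merely compactly supported ones.
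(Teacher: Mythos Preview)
The paper does not actually prove this proposition: immediately before the statement it says ``Its proof is given in \cite[Prop.~2.3]{BBMN}, by adapting \cite[Prop.~3.1]{DOW}.'' So there is no in-paper argument to compare against; the result is imported from the literature on the kinetic formulation of scalar conservation laws.

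That said, your outline differs in strategy from what those references do, and the ``main obstacle'' you flag at the end is a genuine gap rather than a technicality. A bounded bilinear form $T\colon C([-R,R])\times C(K)\to\bb R$ need \emph{not} be represented by a Radon measure on $[-R,R]\times K$: Riesz--Markov identifies $M(X\times Y)$ with the dual of $C(X\times Y)=C(X)\hat\otimes_\epsilon C(Y)$, whereas a bounded bilinear form only gives a functional on the projective tensor product $C(X)\hat\otimes_\pi C(Y)$, i.e.\ a bimeasure. Stone--Weierstrass tells you tensor products are dense in $C(X\times Y)$, but density alone does not give a bounded extension unless you already control $T$ in the sup norm of the product, which your uniform-boundedness step does not provide. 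So the Riesz step, as written, does not close.

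The proofs in \cite{DOW,BBMN} avoid this by working with the kinetic function $\chi(v,w)=\un{0<v<w}-\un{w<v<0}$ and the one-parameter family of Kruzhkov-type entropies $\eta_v(w)=(w-v)^+$ (so that $\eta_v''=\delta_v$). One shows that the distribution $\partial_t\chi(v,u)+f'(v)\partial_x\chi(v,u)$ on $\bb R_v\times ]0,T[\times\bb T$ is of the form $\partial_v m$; assumption (i) applied to the convex entropies $\eta_v$ and their differences gives directly that $v\mapsto \wp_{\eta_v,u}$ is a measure-valued map of locally bounded variation in $v$, and $\varrho_u$ is its distributional $v$-derivative, which is then a genuine Radon measure on the product. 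The extra monotonicity in $v$ built into the Kruzhkov family is exactly the structure that substitutes for the missing positivity in your abstract bilinear argument. If you want to salvage your route, you would need to exploit that for $\psi\ge 0$ the entropy $\eta_\psi$ is convex and reduce to a positive-bilinear situation, but at that point you are essentially redoing the kinetic argument.
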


A weak solution $u \in \mc X$ that satisfies the equivalent conditions
in Proposition~\ref{p:kin} is called an \emph{entropy-measure
  solution} to \eqref{e:1.1}. We denote by $\mc E_{u_0}$ the set of
entropy-measure solutions to \eqref{e:1.1} satisfying the initial
condition $u(0)=u_0$.
 
A weak solution $u\in \mc X$ to \eqref{e:1.1} is called an
\emph{entropic solution} iff for each convex entropy $\eta$ the
inequality $\wp_{\eta,u} \le 0$ holds. In particular entropic
solutions are entropy-measure solutions such that $\varrho_u$ is a
negative measure.  It is well known, see e.g.\ \cite{Pe}, that there
exists a unique entropic solution $\bar u\in C\left([0,T]; L_2(\bb
T)\right)$ to \eqref{e:1.1} such that $\bar u(0)=u_0$.

\subsection{Fractional hyperbolic entropy cost of non-entropic solutions} 
Recall that for $u \in \mc E_{u_0}$, $\varrho_u$ denotes its entropy
production measure as defined in Proposition~\ref{p:kin}, while
$\varrho^+$ is the positive part of $\varrho$. Define $I \colon \mc X
\to [0,+\infty]$ by
\begin{equation}
  \label{e:2.15}
  I(u):= 
  \begin{cases}
\displaystyle{
   \varrho_u^+(\bb R\times [0,T]\times \bb T)
             }
           & \text{if $u\in \mc E_{u_0}$}
 \\
+ \infty   & \text{ otherwise }
  \end{cases}
\end{equation}
namely $I(u)$ is the total variation of the positive part of the
entropy production of entropy-measure weak solutions to
\eqref{e:1.1}. The following proposition is proved in
\cite[Prop.~2.6]{BBMN}.
\begin{proposition}
\label{p:H}
The functional $I$ is lower semicontinuous on $\mc X$ and $I(u)=0$ iff
$u$ is an entropic solution to \eqref{e:1.1}.

Assume that there is no interval on $\bb R$ such that $f$ is affine
on such an interval. Then $I$ is coercive on $\mc X$.
\end{proposition}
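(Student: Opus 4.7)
My plan handles the three assertions in turn, using the representation \eqref{e:2.13} of $\wp_{\eta,u}$ in terms of $\varrho_u$ as the common thread.

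For lower semicontinuity, take a sequence $u_n \to u$ in $\mc X$ with $L := \varliminf_n I(u_n) < +\infty$ and, extracting a subsequence, assume $u_n \in \mc E_{u_0}$ with $I(u_n) \to L$. The weak-solution property and the initial condition $u(0) = u_0$ pass to the limit because $f$ is bounded Lipschitz and $u_n \to u$ both in $L_p$ and in $C([0,T]; H^{-1}(\bb T))$. The crucial a priori estimate comes from applying \eqref{e:2.13} to the quadratic entropy $\eta(v) = v^2/2$ against a temporal cutoff approximating $\un{]0,s[}$, which gives the identity
\begin{equation*}
\tfrac{1}{2}\bigl[\|u_n(s)\|_{L_2}^2 - \|u_0\|_{L_2}^2\bigr]
= \varrho_{u_n}\bigl(\bb R \times ]0,s[ \times \bb T\bigr) \le I(u_n),
\qquad s \in (0, T].
\end{equation*}
This simultaneously yields a uniform $L_\infty([0,T]; L_2(\bb T))$ bound on $u_n$ and, by bounding $\varrho_{u_n}^-$ via $I(u_n)$ together with this mass identity, a uniform total-variation bound on $\varrho_{u_n}$; tightness in the $v$-direction follows analogously via the truncated entropy $(|v|-R)_+^2/2$. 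Passing to a further subsequence, $\varrho_{u_n} \to \tilde\varrho$ weakly-$*$ as signed Radon measures on $\bb R \times ]0,T[ \times \bb T$. Letting $n \to \infty$ in \eqref{e:2.13} for each fixed $\eta \in C^2_{\mathrm{b}}$---first for $\eta''$ of compact support (so that $\eta, q$ have linear growth and $\eta(u_n), q(u_n)$ converge in $L_{1,\mathrm{loc}}$), then approximating general $\eta''$ using the tightness of $\varrho_{u_n}$---identifies $\tilde\varrho = \varrho_u$, so $u \in \mc E_{u_0}$. Finally, lower semicontinuity of the positive-part total mass under weak-$*$ convergence on the open set $\bb R \times ]0,T[ \times \bb T$ gives $I(u) \le L$.

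The characterization of zeros is immediate from \eqref{e:2.13}: $I(u) = 0$ means $\varrho_u \le 0$, and $\eta'' \ge 0$ for convex $\eta$ then forces $\wp_{\eta,u} \le 0$, i.e., $u$ is entropic; conversely, if $u$ is entropic, testing with arbitrary convex $\eta$ and positive $\varphi$, the density of $\{\eta'' \otimes \varphi : \eta \text{ convex}, \varphi \ge 0\}$ in the nonnegative cone of $C_c(\bb R \times ]0,T[ \times \bb T)$ forces $\varrho_u \le 0$, so $I(u) = 0$.

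For coercivity under the non-affine assumption on $f$, I show that $\{u \in \mc X : I(u) \le M\}$ is relatively compact in $\mc X$. Compactness in the $C([0,T]; H^{-1})$ factor is routine via Ascoli-Arzel\`a: $\partial_t u_n = -\partial_x f(u_n)$ is uniformly bounded in $L_\infty([0,T]; W^{-1,\infty})$, so $u_n$ is equicontinuous with values in $H^{-1}$, and combining with the $L_\infty([0,T]; L_2)$ bound above and the compactness of $L_2 \hookrightarrow H^{-1}$ supplies the required pointwise compactness. The hard part is strong $L_p$-compactness: passing to the kinetic formulation $\partial_t \chi(v, u_n) + f'(v) \partial_x \chi(v, u_n) = \partial_v m_n$ (with $m_n$ a bounded Radon measure built from $\varrho_{u_n}$), the absence of affine intervals for $f$ is precisely the non-degeneracy hypothesis of the Lions--Perthame--Tadmor averaging lemma (\cite[Chap.~5]{Pe}), yielding $L_{1,\mathrm{loc}}$-compactness of $u_n$, which the $L_\infty([0,T]; L_2)$ bound upgrades to strong $L_p$ compactness for $p < 2$ by interpolation. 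The main technical difficulty compared to \cite{BBMN} is the unboundedness of $u$: the a priori $L_2$ control coming from $I(u) < \infty$ is essential both for tightness of $\varrho_{u_n}$ in $v$ and for passing to the limit in entropies of quadratic growth.
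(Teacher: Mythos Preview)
The paper does not prove Proposition~\ref{p:H}; it simply quotes the result from \cite[Prop.~2.6]{BBMN}. There is therefore no in-paper argument to compare against, and your proposal supplies exactly what the paper omits.

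On its own merits, your sketch is sound and follows the standard route. A few points worth tightening: the mass identity with the quadratic entropy $\eta(v)=v^2/2$ cannot be applied directly, since a priori $u_n\in L_p$ with $p<2$ and $\varrho_{u_n}$ is only known to have finite \emph{positive} part; you should first run the argument with truncated quadratics $\eta_R$ (with $\eta_R''=\un{[-R,R]}$ mollified) to obtain $\int\eta_R(u_n(s))\le \int\eta_R(u_0)+I(u_n)$ and only then let $R\to\infty$ by monotone convergence. This simultaneously gives the $L_\infty([0,T];L_2)$ bound and the finiteness of $\varrho_{u_n}^-$, resolving the apparent circularity. Likewise, extracting the boundary terms at $t=0$ and $t=s$ from the distributional definition of $\wp_{\eta,u}$ requires the $C([0,T];H^{-1})$ continuity and a limiting argument on the temporal cutoff, which you mention but do not detail.

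For coercivity you invoke kinetic averaging lemmas, which is indeed the approach used in \cite{BBMN}; it is worth noting that the present paper, when proving the analogous equicoercivity of $I_\eps$ (Theorem~\ref{t:ecne}-(ii)), uses instead Tartar's compensated compactness argument. Both routes are available here: compensated compactness needs the $H^{-1}$ compactness of the entropy productions (which follows from the total-variation bound on $\varrho_{u_n}$), while averaging needs the kinetic reformulation and the non-degeneracy of $f'$. Your identification of the non-affine hypothesis with the non-degeneracy condition is correct in either framework.
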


The following theorem is the main result of this paper.
\begin{theorem}
 \label{t:ecne} 
\begin{itemize}
\item[{\rm (i)}] {The sequence of functionals $\{I_\eps\}$
    satisfies the $\Gamma$-liminf inequality
    $\Gamma$\textrm{-}$\varliminf_\eps I_\eps \ge I$ on $\mc X$.}

\item[{\rm (ii)}] {Assume that there is no interval on $\bb R$ such
    that $f$ is affine on such an interval. Then the sequence of
    functionals $\{I_\eps\}$ is equicoercive on $\mc X$.}
\end{itemize}
\end{theorem}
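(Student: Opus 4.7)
\emph{Part (i)} (the $\Gamma$-liminf inequality). Given $u_\eps \to u$ in $\mc X$ with $L:=\varliminf_\eps I_\eps(u_\eps)<\infty$, I would first apply Remark~\ref{r:riesz} to extract controls $\Phi_\eps\in\mc H$ with $\|\Phi_\eps\|_{\mc H}^2 = 2\eps\,I_\eps(u_\eps) = O(\eps)$ solving~\eqref{e:control}. Testing~\eqref{e:control} against $u_\eps$ and using Young's inequality yields the a priori bound $\|u_\eps\|_{L_\infty([0,T];L_2)}^2 + \eps\,\|u_\eps\|_{\mc H}^2 \le C(L,\|u_0\|_{L_2})$, whence both the viscous term $\tfrac{\eps}{2}(-\partial_{xx})^s u_\eps$ and the control term $(-\partial_{xx})^s\Phi_\eps$ vanish in $\mc H^*$ as $\eps\downarrow 0$; passing to the limit in~\eqref{e:control} shows that $u$ is a weak solution of~\eqref{e:1.1} with $u(0)=u_0$.

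The core of the argument is the entropy-production bound. Fix a convex $\eta\in C^2_{\mathrm b}(\bb R)$ and $\varphi\in C^\infty_{\mathrm c}(]0,T[\times\bb T)$, $\varphi\ge 0$. Denote by $U_\eps$, $\Phi^*_\eps$ the Caffarelli--Silvestre extensions of $u_\eps$, $\Phi_\eps$ to $\bb T\times(0,\infty)$, and by $\widetilde\varphi$ any non-negative smooth extension of $\varphi$. Formally testing~\eqref{e:control} against $\varphi\,\eta'(u_\eps)$ and rewriting each occurrence of $(-\partial_{xx})^s$ as the trace of the weighted Dirichlet form on the extension should give
\[
\wp_{\eta,u_\eps}(\varphi) = c_s\!\int\! y^{1-2s}\,\eta'(U_\eps)\,\nabla\widetilde\varphi\cdot\bigl[-\tfrac{\eps}{2}\nabla U_\eps+\nabla\Phi^*_\eps\bigr] + c_s\!\int\! y^{1-2s}\,\widetilde\varphi\,\eta''(U_\eps)\bigl[\nabla U_\eps\cdot\nabla\Phi^*_\eps - \tfrac{\eps}{2}|\nabla U_\eps|^2\bigr],
\]
the integrals running over $[0,T]\times\bb T\times(0,\infty)$. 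The first integral is $o(1)$ since $\eps\nabla U_\eps$ and $\nabla\Phi^*_\eps$ both have weighted $L_2(y^{1-2s})$-norm $O(\sqrt\eps)$ from the energy estimate and from $\|\Phi_\eps\|_{\mc H}^2 = O(\eps)$. For the second integral, the sign conditions $\eta''\ge 0$, $\widetilde\varphi\ge 0$ allow Young's inequality $\nabla U_\eps\cdot\nabla\Phi^*_\eps\le \tfrac{\eps}{2}|\nabla U_\eps|^2 + \tfrac{1}{2\eps}|\nabla\Phi^*_\eps|^2$; the dissipative term cancels exactly the $-\tfrac{\eps}{2}|\nabla U_\eps|^2$ contribution, leaving
\[
\wp_{\eta,u_\eps}(\varphi) \le o(1) + \frac{\|\varphi\|_\infty\|\eta''\|_\infty}{2\eps}\,\|\Phi_\eps\|_{\mc H}^2 = o(1) + \|\varphi\|_\infty\|\eta''\|_\infty\,I_\eps(u_\eps).
\]
Letting $\eps\downarrow 0$ gives $\wp_{\eta,u}(\varphi)\le \|\varphi\|_\infty\|\eta''\|_\infty\,L$ for every convex $\eta\in C^2_{\mathrm b}$ and $\varphi\ge 0$; Proposition~\ref{p:kin} then places $u$ in $\mc E_{u_0}$ and, taking $\eta'',\varphi$ approximating characteristic functions, $I(u)=\varrho_u^+(\bb R\times[0,T]\times\bb T)\le L$.

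\emph{Part (ii)} (equicoercivity). From $I_\eps(u_\eps)\le M$ the same energy estimate gives $\|u_\eps\|_{L_\infty([0,T];L_2)}\le C(M)$, and since $p<2$ this forces uniform integrability of $\{|u_\eps|^p\}$ on $[0,T]\times\bb T$. Using~\eqref{e:control} and the Lipschitz bound on $f$, $\partial_t u_\eps$ is uniformly bounded in a fixed negative Sobolev space, so Aubin--Lions yields relative compactness in $C([0,T];H^{-1}(\bb T))$. To promote weak to strong $L_p$-compactness, I would invoke Tartar--DiPerna compensated compactness (cf.\ \cite[Chap.~5]{Pe}): the extension-based computation of part~(i) applied simultaneously to a large enough family of entropies supplies uniform total-variation bounds for the entropy productions $\wp_{\eta,u_\eps}$, and the no-affine-interval hypothesis forces the Young measure associated to $\{u_\eps\}$ to reduce to a Dirac mass almost everywhere, giving strong $L_p$ convergence.

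The hard part will be rigorously justifying the formal test against $\varphi\,\eta'(u_\eps)$ when $u_\eps$ only lies in $\mc H\cap C([0,T];L_2)$, since no pointwise chain rule for the fractional Laplacian is available at this regularity. The Caffarelli--Silvestre extension is precisely the tool that circumvents this obstacle: it recasts $(-\partial_{xx})^s$ as the Dirichlet-to-Neumann map of a local degenerate elliptic problem, where classical mollification and chain-rule manipulations apply; the delicate book-keeping of boundary versus bulk contributions on the weighted half-space is where the main technical work lies.
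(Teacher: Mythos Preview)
Your approach to Part~(i) is close to the paper's and relies on the same essential tool: the Caffarelli--Silvestre extension turns the nonlocal operator into a local one on the half-space, where the chain rule can be applied to $\eta'(\bar u_\eps)$. The paper proceeds from the variational representation~\eqref{e:Irap} and plugs in a specific test function, whereas you test the weak equation~\eqref{e:control} and apply Young's inequality to the control term; the two routes are equivalent (one is the Legendre dual of the other). However, your final step has a genuine gap. From $\wp_{\eta,u}(\varphi)\le \|\varphi\|_\infty\|\eta''\|_\infty L$ for convex $\eta$ and $\varphi\ge 0$ you conclude $\varrho_u^+(\bb R\times[0,T]\times\bb T)\le L$ by ``taking $\eta'',\varphi$ approximating characteristic functions''. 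This does not follow: for a signed measure $\varrho$ on a product space, the supremum of $\int g(v)h(t,x)\,d\varrho$ over separable test functions $0\le g,h\le 1$ can be strictly smaller than $\varrho^+$ of the whole space (take e.g.\ $\varrho=\delta_{(0,0)}-\delta_{(0,1)}-\delta_{(1,0)}+\delta_{(1,1)}$). The paper handles this by testing against $(t,x)$-dependent entropies $\vartheta(v,t,x)$ with $0\le\partial_{vv}\vartheta\le 1$, extended as $\tilde\varphi(t,x,y)=\eps\,\vartheta'(\bar u_\eps(t,x,y),t,x)\chi(y)$; your Young-inequality step then yields directly $-\!\int[\vartheta_t(u,t,x)+Q_x(u,t,x)]\le I_\eps(u_\eps)+o(1)$, whose supremum over such $\vartheta$ is shown in \cite[Formula~(5.1)--(5.2)]{BBMN} to equal $I(u)$. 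So the fix is easy, but the separable form you wrote is not sufficient.

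For Part~(ii) your outline matches the paper's (Tartar compensated compactness once $H^{-1}$-compactness of the entropy productions is known), with two minor imprecisions. First, the bound you invoke for $\partial_t u_\eps$ is~\eqref{e:bound3}, whose constant $C_\eps$ blows up as $\eps\downarrow 0$; uniform equicontinuity in $C([0,T];H^{-1})$ comes instead from a direct H\"older estimate as in Lemma~\ref{l:conth}, and one then uses Ascoli--Arzel\`a rather than Aubin--Lions. Second, the entropy productions are not uniformly bounded in total variation: what the extension computation actually gives (this is Lemma~\ref{l:compentr} in the paper) is a decomposition $\wp_{\eta,u_\eps}=\wp^1_\eps+\wp^2_\eps$ with $\wp^1_\eps$ a bounded measure and $\|\wp^2_\eps\|_{\mc H^*}\to 0$, which is exactly what is needed for $H^{-1}$-compactness and hence for Tartar's div--curl argument.
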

Note that Theorem~\ref{t:ecne} implies that the $\Gamma$-liminf
inequality holds even in weaker topologies. For instance, if $u_\eps
\to u$ in the sense of distributions, then still one has
$\varliminf_\eps I_\eps(u_\eps) \ge I(u)$. The $\Gamma$-liminf
inequality also implies some stability results for the fractional
viscous approximation to conservation laws, as shown in the next
corollary.
\begin{corollary}
\label{c:stab}
Let $E_\eps \in L_2([0,T]\times \bb T)$ be such that $\lim_\eps
\eps^{-1} \|E_\eps\|_{L_2([0,T]\times \bb T)}^2=0$. Then the solution
$u_\eps$ to \eqref{e:1.3} converges to the entropic solution of
\eqref{e:1.1}.
\end{corollary}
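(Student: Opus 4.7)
The plan is to reduce the corollary directly to Theorem~\ref{t:ecne} by recognizing that the hypothesis $\eps^{-1}\|E_\eps\|_{L_2}^2\to 0$ is exactly the statement that $I_\eps(u_\eps)\to 0$.

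First I would verify that, when $u_\eps$ solves \eqref{e:1.3} with initial datum $u_0$ and satisfies the regularity $u_\eps\in \mc H\cap C([0,T];L_2(\bb T))$ required in \eqref{e:Ie} (which follows by standard energy estimates for the fractional viscous equation since $s>1/2$, $f$ is Lipschitz, $u_0\in L_2(\bb T)$, and $E_\eps\in L_2$), the field $\Phi_\eps:=(-\partial_{xx})^{-s/2}E_\eps$ realizes the equation in Remark~\ref{r:riesz}: indeed $(-\partial_{xx})^s\Phi_\eps=(-\partial_{xx})^{s/2}E_\eps$, so \eqref{e:1.3} rewrites as \eqref{e:control}. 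By uniqueness of $\Phi$ in Remark~\ref{r:riesz} and the identity $\|\Phi_\eps\|_{\mc H}^2=\|E_\eps\|_{L_2([0,T]\times\bb T)}^2$, we obtain
\begin{equation*}
I_\eps(u_\eps)\;=\;\frac{\eps^{-1}}{2}\|\Phi_\eps\|_{\mc H}^2\;=\;\frac{\eps^{-1}}{2}\|E_\eps\|_{L_2([0,T]\times\bb T)}^2\;\xrightarrow[\eps\to 0]{}\;0.
\end{equation*}

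Next, using the equicoercivity statement Theorem~\ref{t:ecne}-(ii) (which, as the corollary implicitly requires for the argument to close, is applied under the assumption that $f$ is nowhere affine on an interval), the sequence $\{u_\eps\}$ is relatively compact in $\mc X$. Let $u$ be any subsequential limit. By the $\Gamma$-liminf inequality Theorem~\ref{t:ecne}-(i) applied along that subsequence,
\begin{equation*}
I(u)\;\le\;\varliminf_{\eps\to 0}I_\eps(u_\eps)\;=\;0,
\end{equation*}
so $I(u)=0$. Proposition~\ref{p:H} then yields that $u$ is an entropic solution to \eqref{e:1.1}, and the initial condition $u(0)=u_0$ is preserved because $u\in \mc X\subset C([0,T];H^{-1}(\bb T))$ and each $u_\eps$ satisfies $u_\eps(0)=u_0$. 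By Kruzhkov uniqueness (recalled after Proposition~\ref{p:kin}) one has $u=\bar u$.

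Since every subsequential limit coincides with the unique entropic solution $\bar u$, the whole family $u_\eps$ converges to $\bar u$ in $\mc X$, which is the desired conclusion. The only nontrivial ingredient is the quantitative identification of $I_\eps(u_\eps)$ with $\tfrac{1}{2}\eps^{-1}\|E_\eps\|_{L_2}^2$; everything else is an immediate application of the $\Gamma$-convergence/equicoercivity machinery. I would expect no further obstacle beyond ensuring the regularity of $u_\eps$ needed to place it in the effective domain of $I_\eps$, which is routine for the fractional parabolic regularization considered here.
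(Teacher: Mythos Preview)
Your proof is correct and follows essentially the same route as the paper's: identify $I_\eps(u_\eps)$ with (at most) $\tfrac{1}{2}\eps^{-1}\|E_\eps\|_{L_2}^2$, invoke equicoercivity to extract a limit, apply the $\Gamma$-liminf inequality to get $I(u)=0$, and conclude by Proposition~\ref{p:H} and Kruzhkov uniqueness. The only cosmetic discrepancy is that the paper writes $\|\Phi_\eps\|_{\mc H}^2\le\|E_\eps\|_{L_2}^2$ rather than equality (which accommodates $E_\eps$ with nonzero spatial mean, since the zero Fourier mode is annihilated by $(-\partial_{xx})^{s/2}$), but this does not affect the argument.
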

One may prove that the above corollary is sharp, in the sense that, if
$f$ is non-affine, for all $\delta>0$ there exists a sequence $E_\eps$
such that $\eps^{-1} \|E_\eps\|_{L_2([0,T]\times \bb T)} \le \delta$,
but $u_\eps$ converges to a solution to \eqref{e:1.1} which is not
entropic.

\subsection{Quasipotential}
\label{ss:quasipstat}
The functionals $I_{\eps}$ and $I$ as well as the space $\mc X$
introduced above depend on the time horizon $T$. In this section we
introduce in the notation the dependence on this parameter, so that
these objects will be denoted by $I_{\eps,T}$, $I_{T}$ and $\mc
X_{T}$.

Let $\int_{\bb T} u_{0}(x)\,dx=m \in \bb R$, then $I_{\eps,T}(u) =
I_{T}(u)=+\infty$ unless $\int_{\bb T} u(t,x)\,dx=m$ for each
$t\in[0,T]$. In addition, it is easy to see that the constant profile
$w(x)\equiv m$ is ''globally attractive'' for $I_{\eps,T}$, $I_T$, in
the sense that if $u \in C([0,+\infty[,L_2(\bb T))$ is such that
$I_{\eps,T}(u)$ or $I_T(u)$ are bounded uniformly in $T$, then $u$
will stay most of the time close to $m$. We are thus interested in
calculating the so-called quasipotential of the above functionals
starting at $m$.

More precisely, let $m\in \bb R$ define $V_{\eps}, V \colon \bb R
\times L_{2}(\bb T) \to [0,+\infty]$ as
\begin{equation}
\label{e:Veps1}
V_{\eps}(m;w):= \inf_{T>0} \inf_{\substack{u \in \mc X_{T} 
    \\ u(t=0)\equiv m,\,u(t=T)=w}} I_{\eps,T}(u)
\end{equation}
\begin{equation*}
  V(m;w):= \inf_{T>0} \inf_{\substack{u \in \mc X_{T} 
      \\ u(t=0)\equiv m,\,u(t=T)=w}} I_{T}(u)
\end{equation*}

Note that the definition of $V_\eps(m;w)$ and $V(m;w)$ also makes
sense out of $L_2(\bb T)$, but in view of \eqref{e:bound2} it is easily
seen that $V_\eps(m;w)=V(m;w)=+\infty$ if $w \not \in L_2(\bb T)$.

The following theorem gives an explicit characterization of $V_\eps$. 
\begin{theorem}
\label{t:quasipot}
It holds
\begin{equation*}
V_{\eps}(m;w)= 
\begin{cases}
\frac 12 \|w-m\|_{L_2(\bb T)}^2 & \text{if $\int_{\bb T}w(x)\,dx=m$}
\\
+\infty & \text{otherwise}
\end{cases}
\end{equation*}
\end{theorem}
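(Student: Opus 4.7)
My plan is to prove the theorem by matching lower and upper bounds. For the \textbf{lower bound} $V_\eps(m;w)\ge\tfrac12\|w-m\|_{L_2}^2$, I take an admissible pair $(u,\Phi)$ as in Remark~\ref{r:riesz} with $u(0)\equiv m$, $u(T)=w$, and test the control equation in $L_2(\bb T)$ against $u-m$. The convective term vanishes by periodicity, $\langle \partial_x f(u), u-m\rangle = \int_{\bb T}\partial_x G(u)\,dx = 0$ with $G'(v)=vf'(v)$. Rewriting the source as $\langle (-\partial_{xx})^{s/2}\Phi, (-\partial_{xx})^{s/2}(u-m)\rangle$ and using Young's inequality $ab\le\tfrac{1}{2\eps}a^2+\tfrac{\eps}{2}b^2$ absorbs the dissipative contribution, yielding $\tfrac{d}{dt}\tfrac12\|u-m\|_{L_2}^2 \le \tfrac{1}{2\eps}\|(-\partial_{xx})^{s/2}\Phi\|_{L_2}^2$; integration over $[0,T]$ gives the bound. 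Finiteness of the cost also forces $\int_{\bb T} w\,dx=m$, by mass conservation obtained testing the weak form against $\varphi\equiv 1$.

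For the \textbf{upper bound} the equality case in Young's inequality singles out $\Phi=\eps(u-m)$, equivalently $u$ solving the anti-diffusive equation $\partial_t u + \partial_x f(u) = \tfrac{\eps}{2}(-\partial_{xx})^s u$. This equation is ill-posed forward in time, but its time-reversal $\tilde u(r,x):=u(T-r,x)$ satisfies the well-posed dissipative equation $\partial_r \tilde u + \tfrac{\eps}{2}(-\partial_{xx})^s\tilde u = \partial_x f(\tilde u)$. Solving it forward from $\tilde u(0)=w$ gives existence and smoothness for bounded Lipschitz $f$ and $s>1/2$, and the energy identity $\tfrac{d}{dr}\tfrac12\|\tilde u-m\|_{L_2}^2 = -\tfrac{\eps}{2}\|\tilde u-m\|_{\dot H_s}^2$ combined with Poincar\'e on $\bb T$ yields the exponential decay $\|\tilde u(r)-m\|_{L_2}\le e^{-c\eps r}\|w-m\|_{L_2}$. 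Setting $u(t):=\tilde u(T-t)$, a direct check via Remark~\ref{r:riesz} confirms $\Phi=\eps(u-m)$, and therefore
\[
I_{\eps,T}(u) \;=\; \tfrac{\eps}{2}\int_0^T\|\tilde u(r)-m\|_{\dot H_s}^2\,dr \;=\; \tfrac12\|w-m\|_{L_2}^2 - \tfrac12\|\tilde u(T)-m\|_{L_2}^2,
\]
the second equality being the energy identity again. Since $u(0)=\tilde u(T)\ne m$, this candidate is not yet admissible.

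\textbf{The main obstacle} is to correct the initial condition with vanishing extra cost. I would prepend a short interval $[0,\tau]$ on which $u$ is the linear interpolation from $m$ to $\tilde u(T)$, yielding an admissible path on $[0,\tau+T]$. Computing \eqref{e:Ie} on this prepended patch produces a bound involving $\tau^{-1}\|\tilde u(T)-m\|_{\dot H_{-s}}^2$, $\|\tilde u(T)-m\|_{L_2}^2$, $\eps\tau\|\tilde u(T)-m\|_{\dot H_s}^2$, and an $\eps^{-1}\tau$-weighted $\dot H_{-s}$-norm of $f'(u)\partial_x \tilde u(T)$; each has to be shown to vanish as $T\to\infty$. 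The first two follow from the exponential $L_2$-decay above, while the $\dot H_s$ and flux contributions call upon parabolic smoothing for the dissipative $\tilde u$-equation (using that the fractional heat semigroup dominates the Lipschitz transport when $s>1/2$), and are handled at least along a subsequence $T_n\to\infty$ via the time-integrability $\int_0^\infty\|\tilde u-m\|_{\dot H_s}^2\,dr<\infty$. For general $w\in L_2(\bb T)$ the construction is combined with a preliminary mollification $w\to w_\eta\in C^\infty(\bb T)$, glued to $w$ by an additional short linear patch of cost $O(\|w-w_\eta\|_{L_2}^2)$, letting $\eta\to 0$ at the end.
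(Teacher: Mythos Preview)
Your approach is correct and parallels the paper's, with one organizational difference worth noting. The paper first proves an exact identity (Lemma~\ref{l:ubve}): for any admissible $u$ with $u(0)\equiv m$, $u(T)=w$,
\[
I_{\eps,T}(u)=\tfrac12\|w-m\|_{L_2}^2+\tfrac{\eps^{-1}}{2}\big\|\partial_t u+\partial_x f(u)-\tfrac{\eps}{2}(-\partial_{xx})^s u\big\|_{\mc H_T^*}^2,
\]
obtained by expanding the $\mc H^*$-square and computing the cross terms $(\partial_t u,(-\partial_{xx})^s u)_{\mc H^*}=\tfrac12\|w-m\|_{L_2}^2$ and $(\partial_x f(u),(-\partial_{xx})^s u)_{\mc H^*}=0$. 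This simultaneously gives the lower bound and reduces the upper bound to making the \emph{adjoint} residual small on a suitable path. Your Young-inequality energy estimate is precisely the inequality half of this identity; for the upper bound you compute the cost piecewise instead of invoking the identity globally, but you arrive at the same construction (time-reversed viscous flow glued to a short linear interpolation near $m$) and the same estimates on the patch. The paper uses a spatial reflection $v(t,x)\mapsto v(T-t,-x)$ so that $v$ solves \eqref{e:1.2} itself, whereas your $\tilde u$ solves \eqref{e:1.2} with flux $-f$; these are equivalent. Finally, your mollification step is unnecessary: the energy identity already gives $\int_0^\infty\|\tilde u(r)-m\|_{\dot H_s}^2\,dr\le\eps^{-1}\|w-m\|_{L_2}^2<\infty$ for any $w\in L_2(\bb T)$, so the subsequence argument and the patch estimates go through directly.
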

$V$ has been calculated in \cite{BCM}, where it is shown that it
enjoys the same explicit representation as $V_\eps$ in
Theorem~\ref{t:quasipot} above.

\section{Local realization of the fractional Laplacian}
\label{s:3}
It is well known that one can see the operator
$(-\partial_{xx})^{1/2}$ on $\bb R$ by considering it as the Dirichlet
to Neumann operator associated to the harmonic extension in the
halfspace $\bb R \times \bb R^+$, paying the price to add a new
variable. In \cite{CS}, Caffarelli and Silvestre proved that this
is also possible for any power $s \in ]0,1[$ of
the Laplacian. In this section we shortly recall such a realization
when $\bb R$ is replaced by $\bb T$, together with a representation of
the bilinear form $(\varphi, \psi) \mapsto \langle (-\partial_{xx})^s
\varphi, \psi \rangle$, that will come useful later.

Hereafter in this paper, we denote by $\nabla$ the gradient operator
$\nabla= (\partial_x, \partial_y)$ on $\bb T \times \bb R^+$. Given $u
\in H^s(\bb T)$, we let $(x,y) \in \bb T \times \bb R^+ \mapsto \bar
u(x,y) \in \bb R$ be the unique solution to
\begin{equation}
\label{bdyFrac2} 
\begin{cases}
  \nabla \cdot (y^{1-2s} \nabla \bar u) = 0 
                  & \text{on $\bb T \times
    \bb R^+$}
  \\
  \bar u = u & \text{ on $\bb T \times \{y=0\}$}
\end{cases}
\end{equation} 
such that
\begin{equation}
  \|\bar u \|_{\dot H^1(\bb T\times \bb R^+, y^{1-2s})}^2 := 
          \int_{\bb T\times \bb R^+} y^{1-2s} |\nabla \bar u(x,y)|^2\,dx\,dy
\end{equation}
is finite. $\bar u$ is called the $s$-harmonic extension of $u$. The
following theorem is proved in \cite{CS}.

\begin{theorem}\label{t:realization}
  There exists a constant $c_s>0$ depending only on $s$, such that for
  every $u\in H^s(\bb T)$
\begin{equation*}
(-\partial_{xx})^s v= 
   - c_s \lim_{y \downarrow 0} y^{1-2s} \partial_y \bar u
\end{equation*}
where the equality holds in the distributional sense.
\end{theorem}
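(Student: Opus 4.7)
The plan is to reduce the statement to a family of one-dimensional ODEs by Fourier decomposition in the $x$ variable, exploiting the translational symmetry of $\bb T$ shared by both sides of the claimed identity. Writing $u = \sum_{k \in \bb Z} \hat u_k\, e^{2\pi i k x}$, I look for the $s$-harmonic extension in the separated form $\bar u(x,y) = \sum_k \hat u_k\, \theta_k(y) e^{2\pi i k x}$. Substituting into \eqref{bdyFrac2} and matching Fourier modes, each profile $\theta_k \colon [0,\infty) \to \bb R$ must satisfy the degenerate ODE
\begin{equation*}
\bigl(y^{1-2s}\theta_k'\bigr)' \;=\; (2\pi k)^2\, y^{1-2s}\,\theta_k, \qquad \theta_k(0)=1,
\end{equation*}
together with the finite-energy condition $\int_0^\infty y^{1-2s}\bigl[(\theta_k')^2 + (2\pi k)^2 \theta_k^2\bigr]\,dy <\infty$.

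For $k=0$ the ODE is $(y^{1-2s}\theta_0')'=0$ and the only finite-energy solution with $\theta_0(0)=1$ is $\theta_0\equiv 1$. For $k\neq 0$ the rescaling $t=2\pi|k|y$ converts the equation into the modified Bessel equation of order $s$, whose unique (up to a multiplicative constant) solution decaying at $+\infty$ is $K_s$; after normalization so that $\theta_k(0)=1$ and using the classical small-argument asymptotic $K_s(t)\sim \tfrac{\Gamma(s)}{2}(t/2)^{-s}$ as $t\to 0^+$, one gets a unique $\theta_k$ with the expansion
\begin{equation*}
\theta_k(y) \;=\; 1 - \frac{1}{2s\,c_s}\bigl(2\pi|k|\bigr)^{2s} y^{2s} + o(y^{2s}), \qquad y\downarrow 0,
\end{equation*}
with a constant $c_s>0$ depending only on $s$. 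Differentiating yields
\begin{equation*}
-c_s\,\lim_{y\downarrow 0} y^{1-2s}\theta_k'(y) \;=\; (2\pi|k|)^{2s},
\end{equation*}
which is exactly the Fourier symbol of $(-\partial_{xx})^s$ on $\bb T$, so the identity holds mode by mode.

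To promote this to the distributional statement, I pair against an arbitrary $\varphi \in C^\infty(\bb T)$ and need to interchange $\lim_{y\downarrow 0}$ with the Fourier sum. The ODE analysis (or, more robustly, the Bochner subordination representation $\theta_k(y)=\int_0^\infty p_s(y,\tau)e^{-(2\pi k)^2\tau}\,d\tau$ that exhibits $\theta_k$ as a Laplace transform in $k^2$) gives a uniform bound $|y^{1-2s}\theta_k'(y)|\le C(1+|k|)^{2s}$ for $y\in(0,1]$. Combined with the rapid decay of $\hat\varphi_k$, dominated convergence justifies the exchange and produces
\begin{equation*}
\langle -c_s\, y^{1-2s}\partial_y\bar u(\cdot,y),\,\varphi\rangle \;\longrightarrow\; \langle (-\partial_{xx})^s u,\,\varphi\rangle \qquad \text{as } y \downarrow 0 .
\end{equation*}

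The main obstacle is precisely this uniform-in-$k$ control of $y^{1-2s}\theta_k'(y)$ near $y=0$: direct Bessel asymptotics yield only a pointwise limit at each fixed $k$, and some monotonicity or subordination input is needed to turn this into a bound uniform in the mode. Once that is in place the rest is a routine Fourier bookkeeping, and the constant $c_s$ emerges intrinsically from the expansion of $K_s$ at the origin.
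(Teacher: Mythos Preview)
The paper does not supply its own proof of Theorem~\ref{t:realization}; it simply attributes the result to Caffarelli--Silvestre \cite{CS} and moves on. So there is no in-paper argument to compare against. Your Fourier-separation approach is correct and is, in fact, essentially the argument of \cite{CS} transported from $\bb R$ to $\bb T$: there the authors diagonalize via the Fourier transform, here you diagonalize via Fourier series, and in both cases each mode satisfies the same degenerate Bessel-type ODE whose decaying solution is $(\lambda y)^s K_s(\lambda y)$, yielding the symbol $\lambda^{2s}$ after the weighted Neumann trace.

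One small remark: the step you flag as ``the main obstacle'' --- the uniform-in-$k$ bound $|y^{1-2s}\theta_k'(y)|\le C(1+|k|)^{2s}$ on $(0,1]$ --- can be obtained without subordination. After the rescaling $t=2\pi|k|y$ the quantity $y^{1-2s}\theta_k'(y)$ equals $(2\pi|k|)^{2s}\,t^{1-2s}\phi'(t)$ with $\phi$ the \emph{single} normalized profile independent of $k$; the function $t\mapsto t^{1-2s}\phi'(t)$ is continuous on $[0,\infty)$ (the limit at $0$ is the very constant $-c_s^{-1}$ you computed) and bounded, so the uniform estimate is immediate. With that in hand the dominated-convergence interchange is routine, and your proof is complete.
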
  

In addition, it can be proved \cite{Nbi} that if $\tilde u \in \dot
H^{1}(\bb T \times \bb R^+,y^{1-2s})$, then $\tilde u$ can be traced
at $y=0$. By an integration by parts, it is then easy to verify that
for $u \in H^s(\bb T)$
\begin{equation} \label{argmin} 
\begin{split}
  & \| u \|_{\dot H^s(\bb T)}^2 
         = c_s\|\bar u \|_{\dot H^1(\bb T\times
                            \bb R^+, y^{1-2s})}^2 
\\ &
 \quad 
         = c_s \inf \big\{ \|\tilde u\|_{\dot H^1(\bb T\times \bb R^+, y^{1-2s})}^2
                          \,;\: \tilde u \in  \dot H^1(\bb T\times \bb R^+, y^{1-2s}),
                        \,\tilde u(x,0)=u(x) \big\}
\end{split}
\end{equation} 

The following remark is obtained by multiplying \eqref{bdyFrac2} by a
test function $ \tilde \varphi$, integrating the equation by parts on
$\bb T \times [\delta,+\infty[$, and passing to the limit $\delta\to
0$ thanks to Theorem~\ref{t:realization}.
\begin{remark}
\label{r:form}
Let $u \in H^s(\bb T)$ and $\varphi \in C^{\infty}(\bb T)$. Then
\begin{equation*}
\langle (-\partial_{xx})^s u, \varphi \rangle 
   = c_s \int_{\bb T\times \bb R^+} y^{1-2s} \nabla \bar u(x,y) 
      \, \cdot\, \nabla \tilde \varphi(x,y)\,dx\,dy
\end{equation*}
where $\tilde \varphi$ is any smooth, compactly supported function on
$\bb T\times \bb R^+$ such that $\tilde \varphi(x,0)=\varphi(x)$.
\end{remark}

\section{Regularity and a priori bounds}
\label{s:4}

\begin{proof}[Proof of Remark~\ref{r:riesz}]
  Assume that $I_\eps(u)<+\infty$. Then Riesz representation theorem
  for the dual spaces $\mc H$, $\mc H^*$ implies the identity
  \eqref{e:control} when the left and right hand sides are seen as
  elements of $\mc H^*$. Since $f$ is Lipschitz and bounded and $s >
  1/2$, $\partial_x f(u) \in L_2([0,T]; H^{s-1}(\bb T)) \subset \mc
  H$; moreover $(-\partial_{xx})^s u \in \mc H^*$, thus all of the
  right hand side terms of the equation are separately in $\mc H^*$ as
  well, and the equation holds weakly when each single term is checked
  against test functions in $\mc H$. In particular, it holds weakly
  against smooth functions.
\end{proof}

\begin{lemma}
  \label{l:ext1} 
There exists a constant $C_\eps>0$ such that for all $u$
  with $I_\eps(u)<+\infty$
\begin{equation}
\label{e:bound1}
\eps \|u\|_{\mc H}^2\le 2 \|u_0\|_{L_2(\bb T)}^2 + 4 I_\eps(u)
\end{equation}
\begin{equation}
\label{e:bound2}
\sup_{t\in [0,T]} \|u(t)\|_{L_2(\bb T)}^2 
   \le 2 \|u_0\|_{L_2(\bb T)}^2 + 4 I_\eps(u)
\end{equation}
\begin{equation}
\label{e:bound3}
\|\partial_t u\|_{L_2([0,T];H^{-s}(\bb T))}^2 
   \le C_\eps \big[\|u_0\|_{L_2(\bb T)}^2 + I_\eps(u) \big]
\end{equation}
\begin{equation}
\label{e:bound4}
\|u\|_{H^{1/2}([0,T];L_2(\bb T))}^2 
   \le C_\eps \big[\|u_0\|_{L_2(\bb T)}^2 + I_\eps(u) \big]
\end{equation}
\end{lemma}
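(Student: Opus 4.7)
All four estimates flow from the control representation in Remark~\ref{r:riesz}. Fix $u$ with $I_\eps(u)<+\infty$ and let $\Phi\in\mc H$ be the associated potential, so that
\begin{equation*}
\partial_t u + \partial_x f(u) + \tfrac{\eps}{2}(-\partial_{xx})^s u = (-\partial_{xx})^s\Phi,
\qquad \|\Phi\|_{\mc H}^2 = 2\eps\, I_\eps(u).
\end{equation*}
My plan is to deduce \eqref{e:bound1} and \eqref{e:bound2} from a standard energy estimate, \eqref{e:bound3} by reading $\partial_t u$ off from the equation, and \eqref{e:bound4} by classical time-interpolation.

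For the energy estimate I test against $u$ itself. Since $f$ is Lipschitz and $s\in(1/2,1]$, a direct Gagliardo-seminorm calculation yields $\|f(u)\|_{\mc H}\le L\|u\|_{\mc H}$, so $\partial_x f(u)\in\mc H^*$; combined with $(-\partial_{xx})^s u,\,(-\partial_{xx})^s\Phi\in\mc H^*$, the equation places $\partial_t u\in\mc H^*$. The Lions--Magenes chain rule for the Gelfand triple $\dot H_s(\bb T)\hookrightarrow L_2(\bb T)\hookrightarrow \dot H_{-s}(\bb T)$ (the mean of $u$ is preserved by the equation, so the mean-zero reduction is harmless) then supplies a continuous $L_2$-valued representative of $u$ together with the identity $\tfrac12\tfrac{d}{dt}\|u(t)\|_{L_2}^2=\langle \partial_t u,u\rangle_{\mc H^*,\mc H}$ in $L_1([0,T])$. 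The transport term vanishes by periodicity, $\langle \partial_x f(u),u\rangle=-\int_{\bb T}\partial_x F(u)\,dx=0$ with $F$ a primitive of $f$, leaving
\begin{equation*}
\tfrac12\tfrac{d}{dt}\|u\|_{L_2}^2 + \tfrac{\eps}{2}\|u\|_{\dot H_s}^2
   = \langle(-\partial_{xx})^{s/2}\Phi,(-\partial_{xx})^{s/2}u\rangle.
\end{equation*}
Young's inequality absorbs half of the viscous term into the right-hand side, and integration on $[0,t]$ produces \eqref{e:bound1} and \eqref{e:bound2} simultaneously.

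For \eqref{e:bound3} I rearrange as $\partial_t u=-\partial_x f(u)-\tfrac{\eps}{2}(-\partial_{xx})^s u+(-\partial_{xx})^s\Phi$ and bound each summand in $L_2([0,T];H^{-s}(\bb T))$. The operator $(-\partial_{xx})^s$ is an isometry $\dot H_s\to \dot H_{-s}$, so the last two contributions are controlled by $\tfrac{\eps}{2}\|u\|_{\mc H}$ and $\|\Phi\|_{\mc H}$ respectively, both finite via \eqref{e:bound1}. For $\partial_x f(u)$ I invoke that $s>1/2$ gives the continuous embedding $H^{s-1}(\bb T)\hookrightarrow H^{-s}(\bb T)$, which combined with the Lipschitz bound $\|f(u(t))\|_{H^s}\le L\|u(t)\|_{H^s}$ yields $\|\partial_x f(u)\|_{\mc H^*}\le C\|u\|_{\mc H}$. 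Summing gives \eqref{e:bound3}. For \eqref{e:bound4} I interpolate in time: \eqref{e:bound1} provides $u\in L_2([0,T];\dot H_s)$ and \eqref{e:bound3} provides $\partial_t u\in L_2([0,T];\dot H_{-s})$, and since $L_2(\bb T)$ is the complex-interpolation midpoint of this pair, the classical Lions--Magenes trace estimate
\begin{equation*}
\|u\|_{H^{1/2}([0,T];L_2(\bb T))}^2
  \le C\bigl(\|u\|_{L_2([0,T];\dot H_s)}^2+\|\partial_t u\|_{L_2([0,T];\dot H_{-s})}^2\bigr)
\end{equation*}
delivers \eqref{e:bound4}.

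The one genuinely delicate point is the chain-rule justification in the energy step: one must check that the pair $(u,\partial_t u)\in\mc H\times\mc H^*$ is admissible in the Gelfand triple $\dot H_s\hookrightarrow L_2\hookrightarrow \dot H_{-s}$ so that $t\mapsto\|u(t)\|_{L_2}^2$ is absolutely continuous with the claimed derivative. Once this is in place, the remaining work is elementary algebraic manipulation of the control equation together with Young's inequality and a standard time-interpolation result.
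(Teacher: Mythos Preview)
Your argument is correct and mirrors the paper's proof: both test the control equation \eqref{e:control} against $u$, use $\langle f(u),\partial_x u\rangle=0$ and Young's inequality to obtain \eqref{e:bound1}--\eqref{e:bound2}, read \eqref{e:bound3} off the equation, and derive \eqref{e:bound4} by Hilbert interpolation at parameter $1/2$ between $L_2([0,T];H^s)$ and $H^1([0,T];H^{-s})$. The only packaging difference is that the paper justifies taking $\varphi=u$ by a density argument on the weak formulation, whereas you invoke the Lions--Magenes Gelfand-triple chain rule; these are equivalent.
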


\begin{proof}
By Remark~\ref{r:riesz}, \eqref{e:control} reads
\begin{equation}
\label{e:ineq1}
\begin{split}
& \langle u(t), \varphi(t) \rangle  
    - \langle u_0, \varphi(0) \rangle     
    - \int_0^t \big[\langle u(r), \partial_t \varphi(r) \rangle 
    + \langle f(u(r)), \partial_x \varphi(r) \rangle \big] \,dr
\\    & \qquad 
+ \frac{\eps}2 \int_0^t \langle (-\partial_{xx})^{s/2} u(r), 
                         (-\partial_{xx})^{s/2} \varphi(r) \rangle\,dr
\\ &
=  \int_0^t \langle -(\partial_{xx})^{s/2} \Phi(r), 
                     (-\partial_{xx})^{s/2} \varphi(r) \rangle\,dr
\end{split}
\end{equation}
for any $t\in [0,T]$, $\varphi \in C^\infty([0,T]\times \bb T)$. Note
that since $f$ is Lipschitz and bounded, $f(u) \in L_2([0,T];H^s(\bb
T))$. Therefore, by a density argument it is easy to see that
\eqref{e:ineq1} holds for any $\varphi \in L_2([0,T];H^s(\bb T))$ such
that $\partial_t \varphi,\,\partial_x \varphi \in \mc H^*$. Recalling
that $\partial_t u \in \mc H^*$, that $u\in L_2([0,T];H^s(\bb T))$,
and thus, as $s>1/2$, $\partial_x u \in L_2([0,T],\dot H^{s-1}(\bb T))
\subset \mc H^*$, the choice $\varphi = u$ is allowed in
\eqref{e:ineq1}. In view of $u \in C([0,T];L_2(\bb T))$, with the same
duality argument it is now immediate to verify that integration by
parts are allowed in \eqref{e:ineq1} with $\varphi=u$. Since $\langle
f(u),\partial_x u \rangle =0$, by Cauchy-Schwarz inequality and
\eqref{e:Irep}
 \begin{equation*}
 \begin{split}
& \frac{1}{2} \|u(t)\|_{L_2(\bb T)}^2 
         -  \frac{1}{2} \|u_0\|_{L_2(\bb T)}^2 + \frac{\eps}2 \|u\|_{\mc H}^2
\\ &
= \int_0^t \langle -(\partial_{xx})^{s/2} \Phi(r), 
                   (-\partial_{xx})^{s/2} u(r) \rangle\,dr \le \|u\|_{\mc H}\,\|\Phi\|_{\mc H}
\\ &
\le \frac{\eps}4 \|u\|_{\mc H}^2 + 2I_\eps(u)
\end{split}
 \end{equation*}
 Passing to the supremum in $t$ one gets \eqref{e:bound1},
 \eqref{e:bound2}. \eqref{e:bound3} is then obtained by
 \eqref{e:control} and \eqref{e:bound1},
 \eqref{e:bound2}. \eqref{e:bound4} follows from the fact that
 $H^{1/2}([0,T];L_2(\bb T))$ is the Hilbert interpolation of parameter
 $1/2$ between $H^1([0,T];H^{-s}(\bb T))$ and $L_2([0,T];H^s(\bb T))$,
 while \eqref{e:bound1}-\eqref{e:bound3} grant the bounds in these
 latter spaces.
\end{proof}

\begin{proof}[Proof of Proposition~\ref{p:semicont}: lower semicontinuity]
  Let $v^n$ be a sequence converging to $v$ in $\mc X$, such that
  $I_\eps(v^n)$ is bounded uniformly in $n$. We need to show
  $\varliminf_n I_\eps(v^n)\ge I_\eps(v)$. By the uniform bound on
  $I_\eps(v^n)$, \eqref{e:bound1} and the lower semicontinuity of the
  Hilbert norm, we have that $v\in L_2([0,T]; H^s(\bb T))$. Still by
  Lemma~\ref{l:ext1}, \eqref{e:bound2}-\eqref{e:bound3} and the
  embedding of $H^{1/2}([0,T];L_2(\bb T))$ in $C([0,T];L_2(\bb T))$,
  we obtain that $v\in C([0,T];L_2(\bb T))$.

  Note that if $I_\eps(u)<+\infty$, for $\Phi$ as in
  Remark~\ref{r:riesz}
\begin{equation}
\label{e:Irap}
\begin{split}
I_\eps(u)= 
& \eps^{-1} \sup_{\phi \in \mc H} 
           \langle -\partial_{xx} \Phi,\phi \rangle_{\mc H^*,\mc H}
                 -\frac 12 \|\phi\|_{\mc H}^2 
\\ = & \eps^{-1} 
\sup_{\varphi \in C^{\infty}([0,T]\times \bb T)}
\langle u(T), \varphi(T) \rangle  
    - \langle u_0, \varphi(0) \rangle     
\\ &
   - \int_0^T \big[\langle u(r), \partial_t \varphi(r) \rangle 
 + \langle f(u(r)), \partial_x \varphi(r) \rangle \big] \,dr
\\    & 
+ \frac{\eps}2 \int_0^T \langle  u(r), 
         (-\partial_{xx})^{s} \varphi(r) \rangle\,dr
-\frac 12 \int_0^T \big\|(-\partial_{xx})^{s/2}
           \varphi(r) \big\|_{L_2(\bb T)}^2\,dr
\end{split}
\end{equation}
while the latter sup in the above formula equals $+\infty$ if $u\in
C([0,T];L_2(\bb T)) \cap \mc H $ but $\partial_t u
+\partial_x f +\frac{\eps}2 (-\partial_{xx})^s u \not \in \mc
H^*$. Thus \eqref{e:Irap} represents the restriction of $I_\eps$ to $
C([0,T];L_2(\bb T)) \cap \mc H$ as a supremum of continuous functions
on $\mc X$. Since, as remarked at the beginning of the proof, one can indeed restrict to the case $v_n,\,v \in C([0,T];L_2(\bb T)) \cap \mc H$, the lower semicontinuity follows.
\end{proof}

\begin{lemma}
\label{l:conth}
If $I_\eps(u)<+\infty$, there exists a constant $C>0$ depending only
on $f$, $T$, and $s$ such that for all $r,\,t \in [0,T]$ and $\varphi
\in H^1(\bb T)$
\begin{equation*}
\big| \langle u(t)-u(r),\varphi \rangle\big| 
       \le C (1+I_\eps(u)) |t-r|^{1/2} \|\varphi\|_{\dot H^1(\bb T)}
\end{equation*}
\end{lemma}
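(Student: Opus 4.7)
The plan is to read off the identity for $\langle u(t)-u(r),\varphi\rangle$ directly from the weak form of \eqref{e:control}, and then to estimate each of the resulting three terms by Cauchy--Schwarz. Integrating \eqref{e:control} against the constant in $x$ shows that $\int_{\bb T}u$ is conserved in time, since the convective flux, the fractional Laplacian of $u$, and the fractional Laplacian of $\Phi$ all have vanishing spatial integral. Hence $\langle u(t)-u(r),\varphi\rangle$ is unchanged if $\varphi$ is replaced by $\varphi-\int_{\bb T}\varphi$, so I may assume that $\varphi$ has zero mean; in particular $\|\varphi\|_{\dot H_{\alpha}(\bb T)}\le \|\varphi\|_{\dot H_{1}(\bb T)}$ for every $\alpha\in[0,1]$, by comparing Fourier multipliers on the nonzero modes.

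Testing \eqref{e:control} against the tensor function $\chi_{[r,t]}(\tau)\varphi(x)$, which is legitimate via a standard time-mollification argument since each term of \eqref{e:control} lies separately in $\mc H^*$ (cf.\ the proof of Remark~\ref{r:riesz}), produces
\begin{align*}
\langle u(t)-u(r),\varphi\rangle
= & \int_r^t \langle f(u(\tau)),\partial_x\varphi\rangle\,d\tau
  - \frac{\eps}{2}\int_r^t \langle (-\partial_{xx})^{s/2}u(\tau),(-\partial_{xx})^{s/2}\varphi\rangle\,d\tau \\
& + \int_r^t \langle (-\partial_{xx})^{s/2}\Phi(\tau),(-\partial_{xx})^{s/2}\varphi\rangle\,d\tau.
\end{align*}
The first term is bounded by $\|f\|_\infty\|\partial_x\varphi\|_{L_2(\bb T)}|t-r|\le \|f\|_\infty\sqrt{T}\,|t-r|^{1/2}\|\varphi\|_{\dot H_{1}(\bb T)}$. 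For the second, Cauchy--Schwarz in time together with \eqref{e:bound1} yields the estimate $\tfrac{\sqrt{\eps}}{2}\sqrt{2\|u_0\|_{L_2(\bb T)}^2+4I_\eps(u)}\,|t-r|^{1/2}\|\varphi\|_{\dot H_{1}(\bb T)}$. For the third, Cauchy--Schwarz and the representation $\|\Phi\|_{\mc H}^2=2\eps I_\eps(u)$ from \eqref{e:Irep} give $\sqrt{2\eps I_\eps(u)}\,|t-r|^{1/2}\|\varphi\|_{\dot H_{1}(\bb T)}$.

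Summing the three contributions and using $\sqrt{x}\le 1+x$ to linearise the square-root factors yields the desired inequality. The argument is essentially a duality computation and I do not expect any real obstacle: the mildly delicate point, namely the use of a non-smooth test function in time, is handled by mollification because \eqref{e:control} holds in $\mc H^*$ pointwise in $\tau$ in a suitable weak sense. The constant $C$ so produced depends on $\|f\|_\infty$, on $T$ and $s$, and implicitly on $\|u_0\|_{L_2(\bb T)}$ and on a uniform upper bound for the small parameter $\eps$.
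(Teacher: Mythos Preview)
Your argument is correct and follows essentially the same route as the paper's proof: both integrate \eqref{e:control} in time against a fixed $\varphi$, split into the three terms (flux, fractional Laplacian of $u$, and $\Phi$-term), and bound each via Cauchy--Schwarz together with \eqref{e:Irep} and \eqref{e:bound1}. Your zero-mean reduction is harmless but not actually needed, since $\|\varphi\|_{\dot H_s(\bb T)}\le\|\partial_x\varphi\|_{L_2(\bb T)}$ already holds for all $\varphi$ (the homogeneous norm ignores the zero mode); this is the ``Sobolev embedding'' the paper invokes. Your closing remark that the resulting constant also depends on $\|u_0\|_{L_2(\bb T)}$ and on an upper bound for $\eps$ is a correct observation that the paper's own proof shares but does not make explicit.
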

\begin{proof}
  For $\varphi \in C^{\infty}(\bb T)$, by \eqref{e:control}, the
  Cauchy-Schwarz inequality and Sobolev embedding
\begin{equation*}
\begin{split}
 \big| \langle u(t) -u(r),\varphi\rangle \big|  = & 
\Big| \int_r^t \langle f(u(r')),\partial_x \varphi \rangle\,dr'
 +\frac{\eps}2 \int_r^t \langle (-\partial_{xx})^{s/2} u(r') ,
                            (-\partial_{xx})^{s/2} \varphi \rangle\,dr'
\\ & 
  -\int_r^t \langle (-\partial_{xx})^{s/2} \Phi(r') ,
                (-\partial_{xx})^{s/2} \varphi \rangle \Big|\,dr'
\\  
\le & 
|t-s| \big[ \sup_{w\in \bb R}|f(w)| \big] \|\partial_x \varphi\|_{L_2(\bb T)}
 +  |t-s|^{1/2}\frac{\eps}{2} \|u\|_{\mc H} \|\partial_x \varphi\|_{L_2(\bb T)}
 \\ &
 + |t-s|^{1/2} \|\Phi \|_{\mc H} \|\partial_x \varphi\|_{L_2(\bb T)} 
\end{split}
\end{equation*}
The proof is concluded using \eqref{e:Irep} and \eqref{e:bound1}.
\end{proof}

\begin{proof}[Proof of Proposition~\ref{p:semicont}: coercivity]
  We want to prove that if a sequence $(v^n)$ is such that
  $I_\eps(v^n)$ is uniformly bounded in $n$, then $(v^n)$ is
  precompact in $\mc X$. By Lemma~\ref{l:conth}, $(v^n)$ is precompact
  in $C([0,T];H^{-1}(\bb T))$. Let $v$ be any limit point of
  $(v^n)$. Up to passing to a subsequence, we can assume $v^n\to v$ in
  $C([0,T];H^{-1}(\bb T))$. By \eqref{e:bound2} $v\in L_2([0,T]\times
  \bb T)$, and  it is enough to prove that $v^n$ converges to $v$
  strongly in $L_2$ to conclude.

  By \eqref{e:bound1}, $v^n$ stays bounded in $\mc H$ and thus it
  converges to $v$ weakly in $\mc H$. Let $\jmath$ be a smooth
  convolution kernel on $\bb T$, and let $\ast$ denote convolution in
  space. Then
\begin{equation}
\label{e:treps}
\begin{split}
&\big\| v^{n} - v \big\|_{L_2([0,T]\times \bb T)} \le 
\big\|\jmath * v^{n} -  \jmath * v
                 \big\|_{L_2([0,T]\times \bb T)}
\\ &
\qquad \qquad 
 +\big\|\jmath * v - v
                 \big\|_{L_2([0,T]\times \bb T)}
      +       \big\| v^n - \jmath * v^n 
                 \big\|_{L_2([0,T]\times \bb T)}
\end{split}
\end{equation}
The first term in the right hand side vanishes as $n\to +\infty$ by
the convergence of $v^n$ in $C([0,T];H^{-1}(\bb T))$. The second term
vanishes if we let $\jmath$ converge to the Dirac mass at $0$. As for
the third term, by Sobolev embedding, there exists $\imath \in \dot
H^{-a}(\bb T)$ for $a>1/2$, such that
\begin{equation*}
\partial_x \imath=  \delta_0-\jmath
\end{equation*}
in the distribution sense.Then
\begin{equation*}
\begin{split}
\big\| v^n - \jmath * v^{n} \big\|_{L_2([0,T]\times\bb T)} 
& =
\Big\| (-\partial_{xx})^{1/2} \imath * v^n  \Big\|_{L_2([0,T]\times\bb T)}
\\ &
= \Big\| [(-\partial_{xx})^{\frac{1-s}{2}}\imath] * 
           [(-\partial_{xx})^{s/2} v^n]  \Big\|_{L_2([0,T]\times\bb T)} 
\\
& \le \sqrt{T} \Big\| (-\partial_{xx})^{\frac{1-s}{2}}\imath \Big\|_{L_1(\bb T)}
   \: \big\|  (-\partial_{xx})^{s/2} v^{n} \big\|_{L_2([0,T]\times\bb R)}
   \\
&
   \le \sqrt{T}\, \|\imath\|_{\dot H^{1-s}} \|v^n\|_{\mc H}
\end{split}
\end{equation*}
where in the third line we used Young inequality. By \eqref{e:bound1},
$\|v^n\|_{\mc H}$ is bounded uniformly in $n$, while $\|\imath\|_{\dot
  H^{1-s}}$ vanishes as we let $\jmath$ converge to $\delta_0$, since
$1-s<1/2$. Therefore all of the terms in the right hand side of
\eqref{e:treps} vanish, as we let $n\to +\infty$ first, and $\jmath
\to \delta_0$ next.
\end{proof}

\section{Equicoercivity and the $\Gamma$-liminf inequality}
\label{s:5}
In this section we prove Theorem~\ref{t:ecne} and
Corollary~\ref{c:stab}.

\begin{proof}[Proof of Theorem~\ref{t:ecne}-(i)]
  Let $u_\eps$ be a sequence converging to $u$ in $\mc X$. We want to
  prove that $\varliminf_{\eps} I_\eps(u_\eps) \ge I(u)$. With no loss
  of generality, we assume that $I_\eps(u_\eps)$ is uniformly
  bounded. Thus, by the convergence in $C([0,T];H^{-1}(\bb T))$, one
  has $u(t=0)=\lim_\eps u_\eps(0)=u_0$.
  
  Let $\vartheta \in C^2_{\mathrm{c}}(\bb R \times ]0,T[ \times \bb
  T)$. Let $Q \in C^1_{\mathrm{c}}(\bb R \times ]0,T[\times \bb T)$ be
  defined (up to an additive function of $(t,x)$) by
\begin{equation*}
Q'(v,t,x)= \int^v f'(w) \,\vartheta'(w,t,x)\,dw
\end{equation*}
where $\vartheta',\,Q'$ denote derivatives with respect to the first
variable. We will also denote $\vartheta_t$, $Q_t$ and $\vartheta_x$,
$Q_x$ the partial derivatives with respect to the second and third
arguments of $\vartheta$ and $Q$ respectively.  By \eqref{e:Irap}, for
all smooth $\varphi \in C^{\infty}_{\mathrm{c}}(]0,T[\times \bb T)$
and $\tilde \varphi \in C^{\infty}_{\mathrm{c}}(]0,T[\times \bb T
\times \bb R)$ such that $\tilde \varphi(t,x,0)=\varphi(t,x)$
\begin{equation*}
\begin{split}
I_\eps(u_\eps) \ge
 &
   - \eps^{-1} \int_0^T \big[\langle u_\eps(r), \partial_t \varphi(r) \rangle 
 + \langle f(u_\eps(r)), \partial_x \varphi(r) \rangle \big] \,dr
\\    & 
+ \frac{1}2 \int_0^T \langle (-\partial_{xx})^{s/2} u_\eps(r), 
                             (-\partial_{xx})^{s/2} \varphi(r) \rangle\,dr
\\ &
-\frac{\eps^{-1}}2 \int_0^T \big\|(-\partial_{xx})^{s/2} \varphi(r) \big\|_{L_2(\bb T)}^2\,dr
\\  
\ge &
 - \eps^{-1}\int_0^T \big[\langle u_\eps(r), \partial_t \varphi(r) \rangle 
 + \langle f(u_\eps(r)), \partial_x \varphi(r) \rangle \big] \,dr
\\    & 
+ \frac{1}2 \int_0^T \int y^{1-2s} \nabla \bar u_\eps(r,x,y)\, \cdot\, 
                                  \nabla \tilde \varphi(r,x,y) \,dx\,dy\,dr
\\ &
- \frac{\eps^{-1}}2 \int_0^T \int y^{1-2s} \nabla \tilde \varphi(r,x,y)\, 
                                 \cdot\, \nabla \tilde \varphi(r,x,y) \,dx\,dy\,dr
\end{split}
\end{equation*}
where the last inequality follows from \eqref{argmin}.

Let $\chi \in C^{\infty}_{\mathrm{c}}(\bb R^+;[0,1])$, such that
$\chi(0)=1$. By the same density argument used in the proof of
Lemma~\ref{l:ext1}, one can indeed plug $\varphi(t,x)=\eps
\vartheta'(u_\eps(t,x),t,x)$ as a test function above. The key point
is now the choice $\tilde \varphi(t,x,y)=\eps \vartheta'(\bar
u_\eps(t,x,y),t,x)\chi(y)$, which is indeed an extension of $\varphi$,
though not the $s$-harmonic one. Again reasoning as in
Lemma~\ref{l:ext1}, integrations by parts are allowed, so that
\begin{equation*}
\begin{split}
& I_\eps(u_\eps) \ge
    - \int_0^T\int_{\bb T} \vartheta_t(u_\eps(r,z),r,z) 
                       + Q_x(u_\eps(r,z),r,z) \,dz\,dr
\\    & 
+ \frac{\eps}2 \int_0^T \int y^{1-2s}\chi(y) \vartheta''(\bar u_\eps(r,z,y),r,z) 
                    \nabla \bar u_\eps(r,x,y)\, \cdot\, \nabla \bar u_\eps(r,x,y)\,dx\,dy\,dr
\\ &
+ \frac{\eps}2 \int_0^T \int y^{1-2s}
\begin{pmatrix}
\vartheta'_x(\bar u_\eps(r,z,y),r,z) \chi(y) \\
\vartheta'(\bar u_\eps(r,z,y),r,z) \chi'(y) 
\end{pmatrix}
\cdot \,\nabla \bar u_\eps(r,x,y)\,dx\,dy\,dr
\\ &
- \frac{\eps}2\int_0^T \int y^{1-2s}\chi(y)^2 
          \vartheta''(\bar u_\eps(r,z,y),r,z)^2
          \nabla \bar u_\eps(r,x,y)\, \cdot\, \nabla \bar u_\eps(r,x,y)\,dx\,dy\,dr
\\ &
- \frac{\eps}2 \int_0^T \int y^{1-2s} \vartheta''(\bar u_\eps(r,z,y),r,z) \chi(y))
\begin{pmatrix}
\vartheta'_x(\bar u_\eps(r,z,y),r,z) \chi(y) \\
\vartheta'(\bar u_\eps(r,z,y),r,z) \chi'(y) 
\end{pmatrix}
\cdot \,\nabla \bar u_\eps(r,x,y)\,dx\,dy\,dr
\\ &
- \eps \int_0^T \int y^{1-2s} 
\begin{pmatrix}
\vartheta'_x(\bar u_\eps(r,z,y),r,z) \chi(y) \\
\vartheta'(\bar u_\eps(r,z,y),r,z) \chi'(y) 
\end{pmatrix}
\cdot \,\begin{pmatrix}
\vartheta'_x(\bar u_\eps(r,z,y),r,z) \chi(y) \\
\vartheta'(\bar u_\eps(r,z,y),r,z) \chi'(y) 
\end{pmatrix}\,dx\,dy\,dr
\end{split}
\end{equation*}
The main idea now is that the third, fifth and sixth lines vanish as
$\eps\to 0$, while the second and forth line compensate (each
being order $1$) for a suitable class of $\vartheta$. Indeed, since
$\vartheta$ and $\chi$ have bounded derivatives up to the second
order, and $y^{1-2s}$ is integrable at $0$, the Cauchy-Schwarz
inequality yields for the third, fifth and sixth lines
\begin{equation*}
\begin{split}
& I_\eps(u_\eps) \ge
    - \int_0^T\int_{\bb T} \vartheta_t(u_\eps(r,z),r,z) + Q_x(u_\eps(r,z),r,z) \,dz\,dr
\\    & 
+ \frac{\eps}2 \int_0^T \int y^{1-2s} \big[\vartheta''(\bar u_\eps(r,z,y),r,z)\chi(y)-
\vartheta''(\bar u_\eps(r,z,y),r,z)^2\chi(y)^2\big]
\\ & \qquad \qquad \qquad 
 \nabla \bar u_\eps(r,x,y)\, \cdot\, \nabla \bar u_\eps(r,x,y)\,dx\,dy\,dr
\\ &
+ \eps\,C\,\big[1+\|u_\eps\|_{\mc H}\big]
\end{split}
\end{equation*}
for a constant $C$ depending only on $\vartheta$, $\chi$, $T$ and $s$.
By the bound \eqref{e:bound1}, recalling that $I_\eps(u_\eps)$ is
uniformly bounded, the last term vanishes as $\eps \to 0$. Note
moreover that if $0 \le \vartheta'' \le 1$ the second line in the last
formula is positive. Recalling that $u_\eps \to u$ strongly in
$L_2([0,T]\times \bb T)$, passing to the limit $\eps \to 0$ and
optimizing over smooth $\vartheta$ for which the inequality holds one
thus obtains
\begin{equation*}
\varliminf_{\eps \downarrow 0} I_\eps(u_\eps) \ge
 \sup_{\vartheta\,:\: 0\le \vartheta''\le 1}   - \int_0^T\int_{\bb T} 
      \vartheta_t(u(r,z),r,z) + Q_x(u(r,z),r,z) \,dz\,dr
\end{equation*}
In \cite[Formula (5.1)--(5.2)]{BBMN} it is proved that the supremum in
the right hand side of the above formula equals $I(u)$, provided
$u(0)=u_0$ (which we already know).
\end{proof}

\begin{lemma}
\label{l:compentr}
Let $(u_\eps)$ be a sequence in $\mc X$ such that $I_\eps(u_\eps)$ is
bounded uniformly in $\eps$.  Let $(\eta,q)$ be an entropy-entropy
flux pair, with $\eta$ bounded with bounded first and second
derivatives. Recall that $\wp_{\eta,u_\eps}$ is the distribution
$\wp_{\eta,u_\eps}:=\partial_t \eta(u_\eps) + \partial_x
q(u_\eps)$. Then $(\wp_{\eta,u_\eps})$ is strongly compact in
$H^{-1}([0,T]\times \bb T)$.
\end{lemma}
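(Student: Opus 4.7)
The plan is to combine the control equation from Remark~\ref{r:riesz} with a chain-rule-type computation, justified via the Caffarelli--Silvestre extension of Section~\ref{s:3}, so as to decompose $\wp_{\eta,u_\eps}$ as the sum of a sequence bounded in $M([0,T]\times\bb T)$ plus a remainder tending to zero in $H^{-1}$, then to conclude by Murat's compactness lemma. Since $I_\eps(u_\eps)$ is uniformly bounded, Remark~\ref{r:riesz} furnishes $\Phi_\eps\in\mc H$ such that $\partial_t u_\eps + \partial_x f(u_\eps) = -\tfrac{\eps}{2}(-\partial_{xx})^s u_\eps + (-\partial_{xx})^s \Phi_\eps$, with $\|\Phi_\eps\|_{\mc H}^2 = 2\eps I_\eps(u_\eps) = O(\eps)$, and by Lemma~\ref{l:ext1}, $\eps\|u_\eps\|_{\mc H}^2 = O(1)$. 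Formally applying $\eta'(u_\eps)$ to both sides suggests the identity $\wp_{\eta,u_\eps} = -\tfrac{\eps}{2}\eta'(u_\eps)(-\partial_{xx})^s u_\eps + \eta'(u_\eps)(-\partial_{xx})^s \Phi_\eps$, which is our starting point.

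To make this rigorous and extract the desired decomposition, I would test against $\varphi\in C^\infty_{\mathrm c}(]0,T[\times\bb T)$, invoke Remark~\ref{r:form} to represent both $(-\partial_{xx})^s$-actions as weighted Dirichlet integrals on $\bb T\times\bb R^+$, and extend the test function $\eta'(u_\eps)\varphi$ by $\tilde\varphi_\eps(t,x,y):=\eta'(\bar u_\eps(t,x,y))\,\varphi(t,x)\,\chi(y)$, with $\chi\in C^\infty_{\mathrm c}(\bb R^+;[0,1])$ and $\chi(0)=1$ (this extension is admissible in the weak formulation by the same density argument used in Lemma~\ref{l:ext1} and in the proof of Theorem~\ref{t:ecne}(i)). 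Expanding $\nabla\tilde\varphi_\eps = \eta''(\bar u_\eps)\chi\varphi\nabla\bar u_\eps + \eta'(\bar u_\eps)\chi(\partial_x\varphi,0) + \eta'(\bar u_\eps)\chi'(0,\varphi)$ and substituting into the weighted Dirichlet pairings, one arrives at the decomposition $\langle\wp_{\eta,u_\eps},\varphi\rangle = \langle\mu_\eps,\varphi\rangle + \langle R_\eps,\varphi\rangle$, where $\mu_\eps$ collects the two $\eta''$-terms $-\tfrac{\eps c_s}{2}\int y^{1-2s}\chi\varphi\eta''(\bar u_\eps)|\nabla\bar u_\eps|^2$ and $c_s\int y^{1-2s}\chi\varphi\eta''(\bar u_\eps)\nabla\bar u_\eps\cdot\nabla\bar\Phi_\eps$. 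Using $\|\eta''\|_\infty<\infty$ and Cauchy--Schwarz, together with the bounds $\eps\int y^{1-2s}|\nabla\bar u_\eps|^2 \le C$ and $\int y^{1-2s}|\nabla\bar\Phi_\eps|^2 \le C\eps$, one obtains $|\langle\mu_\eps,\varphi\rangle|\le C\|\varphi\|_\infty$, so $\mu_\eps$ is bounded in $M([0,T]\times\bb T)$. The remainder $R_\eps$ consists of terms of the form $\langle g_\eps,\varphi\rangle$ or $\langle g_\eps,\partial_x\varphi\rangle$ with $g_\eps$ of $L_2$-norm $O(\sqrt\eps)$ (once more by Cauchy--Schwarz in $y$, exploiting $\|\eta'\|_\infty,\|\chi\|_\infty,\|\chi'\|_\infty<\infty$ and integrability of $y^{1-2s}$ near $0$); hence $R_\eps\to 0$ strongly in $H^{-1}([0,T]\times\bb T)$.

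The claim then follows from Murat's compactness lemma: $\wp_{\eta,u_\eps}$ is a priori bounded in $H^{-1}([0,T]\times\bb T)$ (since $\eta(u_\eps)\in L_\infty$ and $q(u_\eps)\in L_2$, the latter as $q$ is Lipschitz and $u_\eps$ is bounded in $L_2([0,T]\times\bb T)$ by \eqref{e:bound2}), and by the previous step it admits a decomposition into a sequence bounded in $M$ plus a sequence vanishing in $H^{-1}$. I expect the main obstacle to be the rigorous execution of the chain-rule expansion within the weighted Sobolev framework of the Caffarelli--Silvestre extension, and in particular the control of the cross-term $\eta''(\bar u_\eps)\nabla\bar u_\eps\cdot\nabla\bar\Phi_\eps$ through the complementary scalings of the two energy bounds for $\bar u_\eps$ and $\bar\Phi_\eps$ that make their product $O(1)$; a secondary subtlety is supplying the $W^{-1,p}$-bound with $p>2$ required by the standard form of Murat's lemma, which here must be drawn from $\eta(u_\eps)\in L_\infty$ (giving $\partial_t\eta(u_\eps)\in W^{-1,\infty}_{t,x}$) in order to compensate for the merely $H^{-1}$ regularity of $\partial_x q(u_\eps)$.
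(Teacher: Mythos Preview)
Your approach is essentially the paper's: test \eqref{e:control} against $\eta'(u_\eps)\varphi$, lift to the Caffarelli--Silvestre extension via Remark~\ref{r:form}, and split $\wp_{\eta,u_\eps}$ into an $\eta''$-part bounded in $M([0,T]\times\bb T)$ plus a remainder vanishing with $\eps$; the only cosmetic difference is that the paper extends $\varphi$ by its $s$-harmonic lift $\bar\varphi$ (so the remainder is estimated in $\mc H^*$) rather than by your cutoff $\varphi(t,x)\chi(y)$. Your explicit appeal to Murat's lemma, together with the flagged need for a $W^{-1,p}$ bound with $p>2$, is actually more careful than the paper's terse ``Sobolev compact embedding''; note that for the specific entropies used downstream in the proof of Theorem~\ref{t:ecne}(ii) the conjugate flux $q$ is bounded (since $f$ is), which supplies the missing $W^{-1,\infty}$ bound on $\wp_{\eta,u_\eps}$.
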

\begin{proof}
  Let $\varphi \in C^{\infty}([0,T]\times \bb T)$. Reasoning as in
  Lemma~\ref{l:ext1}, one is allowed to test equation
  \eqref{e:control} against the test function $(t,x) \mapsto
  \eta'(u_\eps(t,x)) \varphi(t,x)$. Still by the same argument as in
  Lemma~\ref{l:ext1}, integrations by parts are allowed so that (in
  the following, we denote $\Phi_\eps \equiv \Phi_{u_\eps}$).
\begin{equation*}
\begin{split}
& \wp_{\eta,u_\eps}(\varphi)  =  \langle \eta(u_\eps(T)),\varphi(T) \rangle 
                            - \langle \eta(u_\eps(0)),\varphi(0) \rangle
\\ & \qquad \quad
- \int_0^T \big[ \langle \eta(u_\eps(t)),\partial_t \varphi(t) \rangle + 
\langle q(u_\eps(t)),\partial_x \varphi(t) \rangle \big]\,dt
\\ & \qquad
=-\frac{\eps}2 \int_0^T \langle \eta'(u_\eps(t)) \varphi(t), 
                               (-\partial_{xx})^s u_\eps(t) \rangle\,dt
\\ &
\qquad \quad + \int_0^T \langle \eta'(u_\eps(t)) \varphi(t), 
                          (-\partial_{xx})^s \Phi_\eps(t) \rangle\,dt
\end{split}
\end{equation*}
Recall that for $v \in H^s(\bb T)$, we denoted $\bar v$ its
$s$-harmonic extension of $v$ to $\bb T\times \bb R^+$. Note that
$\eta'(\bar u_\eps) \bar \varphi$ provides an extension (thought not
$s$-harmonic) of $\eta'(u_\eps) \varphi$. Therefore, by
Remark~\ref{r:form} applied with $\tilde \varphi = \eta'(\bar u_\eps)
\bar \varphi$, we get
\begin{equation*}
\begin{split}
& \wp_{\eta,u_\eps}(\varphi)  
=-\frac{\eps}2 \int_0^T \int_{\bb T \times \bb R^+} y^{1-2s} \eta''(\bar u_\eps(t,x,y))
                  \,\bar \varphi(t,x,y) \nabla u_\eps(t,x,y) 
                  \,\cdot \, \nabla u_\eps(t,x,y)\,dx\,dy\,dt
\\ & \qquad
-\frac{\eps}2 \int_0^T \int_{\bb T \times \bb R^+} y^{1-2s} \eta'(\bar u_\eps(t,x,y))
                  \,\nabla \bar \varphi(t,x,y) \,\cdot\, \nabla u_\eps(t,x,y)\,dx\,dy\,dt
\\ & \qquad
+ \int_0^T \int_{\bb T \times \bb R^+} y^{1-2s} \eta''(\bar u_\eps(t,x,y))\,
                 \bar \varphi(t,x,y) \nabla u_\eps(t,x,y) \,\cdot 
              \, \nabla \Phi_\eps(t,x,y)\,dx\,dy\,dt
\\ & \qquad
+\int_0^T \int_{\bb T \times \bb R^+} y^{1-2s} \eta'(\bar u_\eps(t,x,y))
        \,\nabla \bar \varphi(t,x,y) \,\cdot
        \, \nabla \Phi_\eps(t,x,y)\,dx\,dy\,dt
\end{split}
\end{equation*}
Since $\eta$ has bounded derivatives, recalling the variational
definition of the $H^s$-norm \eqref{argmin}, the Cauchy-Schwarz
inequality yields for a constant $C$ depending only on $\eta$ and $T$
\begin{equation*}
\begin{split}
 \big|\wp_{\eta,u_\eps}(\varphi) \big|
\le & \eps\,C \|\bar \varphi\|_{\infty}\,\|u_\eps\|_{\mc H}^2
+ \eps\,C\,\|u_\eps\|_{\mc H} \|\varphi\|_{\mc H}
\\ &
+C\,\|\bar \varphi\|_{\infty} \|u_\eps\|_{\mc H} \|\Phi\|_{\mc H}
+C\, \|\varphi\|_{\mc H} \|\Phi\|_{\mc H}
\end{split}
\end{equation*}
The maximum principle holds for the $s$-harmonic extension,
$\|\bar \varphi\|_{\infty} =\|\varphi\|_{\infty}$, so that using
\eqref{e:bound1}, \eqref{e:Irep}
\begin{equation*}
\begin{split}
 \big|\wp_{\eta,u_\eps}(\varphi) \big| \le C' \big(1+I_\eps(u_\eps)\big) 
           \big( \|\varphi\|_{\infty} + \sqrt{\eps} \|\varphi\|_{\mc H} \big)
\end{split}
\end{equation*}
for a suitable constant $C'$ independent of $\eps$. The last
inequality implies that $\wp_{\eta,u_\eps}$ can be written as the sum
of two distributions $\wp_{\eta,u_\eps}=\wp^1_\eps+\wp^2_\eps$, where
$\wp^1_\eps$ is a finite measure on $[0,T]\times \bb T$ with total
variation bounded uniformly in $\eps$, while $\wp^2_\eps$ has
vanishing $\mc H^*$-norm. By Sobolev compact embedding, both
$\wp^1_\eps$ and $\wp^2_\eps$ are compact in $H^{-1}([0,T]\times \bb
T)$, and thus $\wp_{\eta,u_\eps}$ is.
\end{proof}

Before proving Theorem~\ref{t:ecne}-(ii) we recall some standard facts
concerning Young measures. Let $(u^\eps)$ be a sequence in $\mc X$
such that $\|u_\eps\|_{L_2([0,T]\times \bb T)}$ is uniformly
bounded. Then the sequence of Radon measures
$\delta_{u_\eps(t,x)}(d\lambda)dt\,dx$ over $\bb R\times [0,T] \times
\bb T$ is compact in the weak* topology of Radon measures, and any
limit point can be represented by a Young measure, namely a measurable
map $[0,T]\times \bb T \ni (t,x) \mapsto \mu_{t,x} \in \mc P(\bb R)$
such that, up to passing to subsequences
\begin{equation}
\label{e:youngconv}
\lim_{\eps \downarrow 0} 
     \int_{[0,T]\times \bb T} F(u_\eps(t,x)) \varphi(t,x)\,dt\,dx 
   = \int_{\bb R \times [0,T]\times \bb T} F(\lambda)\varphi(t,x) 
        \,\mu_{t,x}(d\lambda)\, \,dt\,dx
\end{equation}
for all continuous compactly supported test functions $F$ and
$\varphi$. Moreover, $u_\eps$ converges strongly in $L_r([0,T]\times
\bb T)$ for $r<2$ to a $u$ iff $\mu_{t,x}=\delta_{u(t,x)}$.

\begin{proof}[Proof of Theorem~\ref{t:ecne}-(ii)]
  Let $(u_\eps)$ be a sequence in $\mc X$ such that $I_\eps(u_\eps)$
  is bounded uniformly in $\eps$. Lemma~\ref{l:conth} and
  Ascoli-Arzela theorem imply that $u_\eps$ is compact in
  $C([0,T];H^{-1}(\bb T))$. Therefore we need to prove that $u_\eps$
  is strongly compact in $L_p([0,T]\times \bb T)$ to conclude the
  proof.

  In view of the uniform (in $\eps$) bound \eqref{e:bound2} for
  $u_\eps$, there exists a Young measure $ \mu$ such that
  \eqref{e:youngconv} holds (up to passing to a suitable subsequence
  still labeled by $\eps$). We need to prove that there exists $u \in
  L_2([0,T]\times \bb T)$ such that $\mu_{t,x}=\delta_{u(t,x)}$ for
  a.e.\ $(t,x)$. To achieve this, we will follow a celebrated argument
  by Tartar, \cite[Chap.~9]{Se}. However, since we here lack the usual
  $L_\infty$ bounds used in this approach, we shortly reproduce the
  argument adapted to our setting.

  Following closely \cite[Chap.~9]{Se}, thanks to
  Lemma~\ref{l:compentr} one has for a.e.\ $(t,x)$
\begin{equation}
\label{e:iden}
\int_{\bb R^2} [\eta_1(\xi)-\eta_1(\zeta)] [q_2(\xi)-q_2(\zeta)] 
           - [\eta_2(\xi)-\eta_2(\zeta)] [q_1(\xi)-q_1(\zeta)] 
             \mu_{(t,x)}(d\xi)\mu_{(t,x)}(d\lambda) =0
\end{equation}
for $\eta_1$, $\eta_2$ two smooth bounded entropies with bounded
derivatives, and $q_1$, $q_2$ their respective conjugated entropy
fluxes. By a density argument, \eqref{e:iden} is easily seen to hold
for $\eta_1,\,\eta_2$ Lipschitz and uniformly bounded. Fix $M>0$ and
take
\begin{equation*}
\eta_1(v)=
\begin{cases}
v & \text{if $v \in [-M,M]$}
\\
-M & \text{if $v <-M$}
\\
M  & \text{if $v >M$}
\end{cases}
\end{equation*}
\begin{equation*}
\eta_2(v)=
\begin{cases}
f(v) & \text{if $v \in [-M,M]$}
\\
f(-M) & \text{if $v <-M$}
\\
f(M)  & \text{if $v >M$}
\end{cases}
\end{equation*}
Thus \eqref{e:iden} now reads
\begin{equation*}
\begin{split}
\int_{\bb R^2} & \Big[
\Big(\int_{\xi}^{\zeta} f'(a) \un {[-M,M]}(a)\,da
\Big)^2
\\ & \:
 - \Big(\int_{\xi}^{\zeta} \un {[-M,M]}(a)\,da
\Big) \Big(\int_{\xi}^{\zeta} f'(a)^2 \un {[-M,M]}(a)\,da
\Big) \Big]
\mu_{(t,x)}(d\xi)\mu_{(t,x)}(d\lambda) =0
\end{split}
\end{equation*}
By Cauchy-Schwarz inequality, the integrand in square brackets is
always negative, vanishing iff $ f'(a) \un {[-M,M]}(a)$ is constant
between $\xi$ and $\zeta$. Since we assumed that there is no interval
in which $f$ is affine, this implies that the restriction of
$\mu_{(t,x)}$ to $[-M,M]$ is a Dirac mass for a.e.\ $(t,x)$. Since $M$
is arbitrary, we conclude.
\end{proof}

\begin{proof}[Proof of Corollary~\ref{c:stab}]
If we let $\Phi_\eps$ be the solution to
\begin{equation*}
(-\partial_{xx})^s \Phi_\eps = (-\partial_{xx})^{s/2} E_\eps
\end{equation*}
then
\begin{equation*}
I_\eps(u_\eps)= \frac{\eps^{-1}}2 \|\Phi_\eps\|_{\mc H}^2 
           \le  \frac{\eps^{-1}}{2} \|E_\eps\|_{L_2([0,T]\times \bb T)}^2
\end{equation*}
Therefore $I_\eps(u_\eps) \to 0$, and by Theorem~\ref{t:ecne}-(ii), up
to passing to subsequences, $u_\eps \to u$ in $\mc X$ for a suitable
$u\in \mc X$. By Theorem~\ref{t:ecne}-(i), $I(u) \le \varliminf_\eps
I_\eps(u_\eps) = 0$. Thus $I(u)=0$ and by Proposition~\ref{p:H}, $u$
is the (unique) entropic solution to \eqref{e:1.1}.
\end{proof}

\section{Quasipotential}
\label{s:6}
In this section we prove Theorem~\ref{t:quasipot}. As in
Section~\ref{ss:quasipstat}, here we append a subscript $T$ to the
notation, to stress the dependence on $T$.

If $\int_{\bb T} w(x)\,dx \neq m$, then Theorem~\ref{t:quasipot}
follows from the conservation of the total mass of $L_2$-solutions to
\eqref{e:control}. Namely, if $u \in \mc X_T$ is such that
$I_{\eps;T}(u)<+\infty$, then $\int_{\bb T} u(t,x)\,dx = \int_{\bb T}
u(0,x)\,dx$ for all $t \ge 0$, and thus the infimum in \eqref{e:Veps1}
equals $+\infty$.

So hereafter in this section we assume $\int_{\bb T}w(x)\,dx= m$. Then
the proof of Theorem~\ref{t:quasipot} is a consequence of the
following Lemmata. In fact from Lemma~\ref{l:ubve} one gets
$V_\eps(m,w)\ge \frac 12 \|w-m\|_{L_2(\bb T)}^2$, and from
Lemma~\ref{l:ubve} and Lemma~\ref{l:lbve} one has $V_\eps(m,w)\le
\frac 12 \|w-m\|_{L_2(\bb T)}^2 + \gamma$ for each $\gamma>0$.
\begin{lemma}
\label{l:ubve}
Let $T>0$ and $u\in \mc X_T$ be such that
$I_{\eps;T}(u)<+\infty$, $u(0,x)\equiv m$, $u(T,x)=w(x)$. Then
\begin{equation*}
I_{\eps;T}(u) = \frac 12 \|w-m\|_{L_2(\bb T)}^2
+\frac{\eps^{-1}}{2} \Big\|\partial_t u + \partial_x f(u)
         - \frac{\eps}2(-\partial_{xx})^s u
          \Big\|_{\mc H_T^*}^2
\end{equation*}
\end{lemma}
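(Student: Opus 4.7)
The plan is to recognize the identity as a polarization in the Hilbert space $\mc H_T^*$. Writing $A:=\partial_t u+\partial_x f(u)$ and $B:=\frac{\eps}{2}(-\partial_{xx})^s u$, what must be shown reads
\begin{equation*}
\tfrac{\eps^{-1}}{2}\|A+B\|_{\mc H_T^*}^2 - \tfrac{\eps^{-1}}{2}\|A-B\|_{\mc H_T^*}^2 \;=\; \tfrac{1}{2}\|w-m\|_{L_2(\bb T)}^2,
\end{equation*}
and by the polarization identity in the Hilbert space $\mc H_T^*$ the left hand side equals $2\eps^{-1}\langle A,B\rangle_{\mc H_T^*}$. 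Thus the whole statement reduces to computing this cross term.

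Next I would identify the inner product on $\mc H_T^*$ with a duality pairing. Since $B=(-\partial_{xx})^s\beta$ with $\beta=\frac{\eps}{2}u$, and $u\in \mc H_T$ by \eqref{e:bound1} (which is where the finiteness of $I_{\eps;T}(u)$ is used), for any $A\in \mc H_T^*$ one has $\langle A,B\rangle_{\mc H_T^*}=\langle A,\beta\rangle_{\mc H_T^*,\mc H_T}$. Therefore
\begin{equation*}
2\eps^{-1}\langle A,B\rangle_{\mc H_T^*}=\bigl\langle \partial_t u+\partial_x f(u),\,u\bigr\rangle_{\mc H_T^*,\mc H_T}.
\end{equation*}

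The third step is to evaluate this pairing. By the same density/duality argument already used in the proof of Lemma~\ref{l:ext1} (testing \eqref{e:control} against $\varphi=u$, which is allowed because $u\in C([0,T];L_2(\bb T))\cap \mc H_T$ and $\partial_t u\in \mc H_T^*$), one gets the chain rule in time
\begin{equation*}
\langle \partial_t u, u\rangle_{\mc H_T^*,\mc H_T}=\tfrac{1}{2}\|u(T)\|_{L_2(\bb T)}^2-\tfrac{1}{2}\|u(0)\|_{L_2(\bb T)}^2,
\end{equation*}
together with the vanishing of the transport term $\langle \partial_x f(u),u\rangle_{\mc H_T^*,\mc H_T}=-\int_0^T\!\langle f(u),\partial_x u\rangle\,dt=0$, since $f(u)\partial_x u=\partial_x F(u)$ for a primitive $F$ of $f$ and $\bb T$ has no boundary. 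Plugging in the boundary data $u(0)\equiv m$ and $u(T)=w$ with $\int_{\bb T}w=m$ yields
\begin{equation*}
\tfrac{1}{2}\|w\|_{L_2(\bb T)}^2-\tfrac{1}{2}m^2=\tfrac{1}{2}\|w-m\|_{L_2(\bb T)}^2,
\end{equation*}
where the mass condition $\int_{\bb T}w\,dx=m$ is used precisely to discard the linear cross term. Combining the three steps gives the claim.

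The only genuinely delicate point is the justification of the time chain rule and of the vanishing of the flux pairing at the regularity available, namely $u\in \mc H_T\cap C([0,T];L_2(\bb T))$ with $\partial_t u\in \mc H_T^*$ and $f$ Lipschitz bounded; but, as noted, this is exactly the integration by parts carried out in Lemma~\ref{l:ext1} and needs no further argument.
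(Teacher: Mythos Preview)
Your proof is correct and follows essentially the same approach as the paper: expand the Hilbert square to isolate the cross term $\big(\partial_t u+\partial_x f(u),\,(-\partial_{xx})^s u\big)_{\mc H_T^*}$, identify it as a duality pairing against $u$ (the paper uses $u-m$, the mean-zero representative in $\dot H^s$), and evaluate the two pieces via the time chain rule and the vanishing of $\langle f(u),\partial_x u\rangle$, exactly as in Lemma~\ref{l:ext1}. The only cosmetic difference is that by pairing with $u$ rather than $u-m$ you end up with $\tfrac12\|w\|_{L_2}^2-\tfrac12 m^2$ and must invoke the mass constraint $\int_{\bb T}w=m$ (which does follow from $I_{\eps;T}(u)<+\infty$ by conservation of mass, as remarked just before the lemma), whereas the paper's choice of $u-m$ yields $\tfrac12\|w-m\|_{L_2}^2$ directly.
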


\begin{lemma}
\label{l:lbve}
For each $\gamma>0$, there exists $T>0$ and $u\in \mc X_T$ such that
$I_{\eps;T}(u)<+\infty$, $u(0)\equiv m$, $u(T) = w$ and
\begin{equation}
\label{e:resto}
\frac{\eps^{-1}}{2} \Big\|\partial_t u + \partial_x f(u)
         - \frac{\eps}2(-\partial_{xx})^s u
          \Big\|_{\mc H_T^*}^2 \le \gamma
\end{equation}
\end{lemma}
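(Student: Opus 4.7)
By Lemma~\ref{l:ubve}, the plan is to construct a path $u\in\mc X_T$ with $u(0)\equiv m$, $u(T)=w$ and $I_{\eps;T}(u)<\infty$, whose residual $R:=\partial_t u+\partial_x f(u)-\tfrac{\eps}{2}(-\partial_{xx})^s u$ has $\mc H^*_T$-norm squared bounded by $2\eps\gamma$. The key observation I will exploit is that the space-time reflection $u(t,x):=v(\tau-t,-x)$ of any solution $v$ to the forward fractional viscous conservation law
\begin{equation*}
\partial_t v+\partial_x f(v)+\tfrac{\eps}{2}(-\partial_{xx})^s v=0
\end{equation*}
satisfies $R\equiv 0$: the minus signs picked up from time- and space-reversal conspire to flip the sign of the (symmetric) dissipation while preserving the transport term, because $(-\partial_{xx})^s$ commutes with spatial reflection and $\partial_x f(\cdot)$ is odd under it.

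First I would let $v$ be the solution to the forward FVCL with initial datum $v(0,x)=w(-x)$. Since $s>1/2$ this equation is well-posed in $C([0,\infty);L_2(\bb T))\cap L_2([0,\infty);\dot H_s(\bb T))$ and enjoys a parabolic-type regularizing effect; in particular $v(t,\cdot)\in C^\infty(\bb T)$ for every $t>0$ and $\int_{\bb T}v(t,x)\,dx=m$. Fix $T_0:=1$ and a parameter $T_1>0$ to be chosen later, set $w^*:=v(T_1,-\cdot)\in C^\infty(\bb T)$, and put $T:=T_0+T_1$. I would define
\begin{equation*}
u(t,x):=\begin{cases} m+\tfrac{t}{T_0}(w^*(x)-m), & t\in[0,T_0],\\ v(T-t,-x), & t\in[T_0,T]. \end{cases}
\end{equation*}
The two pieces agree at $t=T_0$ (both equal $w^*$), so $u\in C([0,T];L_2(\bb T))$ with $u(0)\equiv m$ and $u(T)=w$. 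Moreover $u\in\mc H_T$ (on $[T_0,T]$ by the global energy identity for $v$, on $[0,T_0]$ trivially) and $I_{\eps;T}(u)<\infty$ as soon as $\|R\|_{\mc H^*_T}<\infty$.

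On $[T_0,T]$, $R$ vanishes identically by the observation above, so it suffices to bound the residual on $[0,T_0]$, where
\begin{equation*}
R(t,x)=\tfrac{1}{T_0}(w^*(x)-m)+\tfrac{t}{T_0}\bigl[f'(u(t,x))\partial_x w^*(x)-\tfrac{\eps}{2}(-\partial_{xx})^s w^*(x)\bigr].
\end{equation*}
Using $\|g\|_{\dot H_{-s}}\le C\|g\|_{L_2}$ (via Fourier on $\bb T$), $\|f'\|_\infty<\infty$, and $\|(-\partial_{xx})^s w^*\|_{\dot H_{-s}}=\|w^*-m\|_{\dot H_s}$, one obtains
\begin{equation*}
\|R\|_{\mc H^*_T}^2\le \frac{C_1}{T_0}\|w^*-m\|_{\dot H_{-s}}^2+C_2\,T_0\bigl(\|w^*-m\|_{\dot H_1}^2+\eps^2\|w^*-m\|_{\dot H_s}^2\bigr),
\end{equation*}
where $C_1,C_2$ depend only on $s$, $\eps$ and $\|f'\|_\infty$. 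With $T_0=1$ fixed, what remains is to choose $T_1$ so that $v(T_1)-m$ is small in each of the three Sobolev norms on the right.

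The $L_2$ energy identity $\tfrac{d}{dt}\|v-m\|_{L_2}^2=-\eps\|v-m\|_{\dot H_s}^2$ combined with the Poincar\'e inequality on $\bb T$ gives exponential $L_2$-decay, and forces $\|v(T_1^n)-m\|_{\dot H_s}\to 0$ along some sequence $T_1^n\to\infty$. Uniform bounds on $v(t)$ in higher Sobolev norms for $t\ge 1$ (by the subcritical fractional regularizing effect, $s>1/2$) together with interpolation against the $L_2$ decay then upgrade this to convergence in $\dot H_1$ as well. Picking $T_1$ from this sequence concludes the argument. The main obstacle is this last step: quantitative convergence of $v(t)$ to its mean in Sobolev norms stronger than $L_2$ for the fractional viscous conservation law with $s\in(1/2,1)$, which is routine for $s=1$ (as used in the local case \cite{BCM}) but for $s<1$ relies on the regularization theory for subcritical fractional Burgers-type equations.
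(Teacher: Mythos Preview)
Your construction is precisely the paper's: a linear interpolation from $m$ to $w^*:=v(T_1,-\cdot)$, followed by the time--space--reversed forward flow on which the residual vanishes identically. The only divergence is in how you estimate the flux contribution $f'(u)\,\partial_x w^*$ on the linear piece. You pass through the embedding $L_2\hookrightarrow \dot H_{-s}$, which costs a full derivative and forces you to control $\|w^*-m\|_{\dot H_1}$; this is exactly why you end up needing the regularization/interpolation step you flag as ``the main obstacle''. The paper instead bounds this term directly in $\dot H_{-s}$ via the pointwise multiplier estimate $\|\omega_1\omega_2\|_{\dot H_{-s}}\le\|\omega_2\|_{L_\infty}\|\omega_1\|_{\dot H_{-s}}$ (applied with $\omega_1=\partial_x u$, $\omega_2=f'(u)$, both orthogonality hypotheses being satisfied), and then uses $\|\partial_x w^*\|_{\dot H_{-s}}=\|w^*-m\|_{\dot H_{1-s}}\le\|w^*-m\|_{\dot H_s}$ since $s>\tfrac12$. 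Everything thus reduces to $\|v(T_1)-m\|_{\dot H_s}$, which is small along a sequence $T_1\to\infty$ directly from the bare energy identity $\int_0^\infty\|v(t)-m\|_{\dot H_s}^2\,dt<\infty$---no appeal to fractional parabolic smoothing is needed. Your route is defensible if one grants the subcritical regularization theory, but the paper's shortcut eliminates precisely the obstacle you identify and keeps the argument self-contained.
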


\begin{proof}[Proof of Lemma~\ref{l:ubve}]
  Since $I_{\eps;T}(u)<+\infty$, as observed in the proof of
  Remark~\ref{r:riesz}, $u \in \mc H_T$, $\partial_t u \in \mc H^*_T$,
  $\Phi_u \in \mc H_T$, and $\partial_x f(u) \in L_2([0,T];\dot
  H^{s-1}(\bb T)) \subset \mc H^*$ as $s>1/2$. Therefore, by
  decomposition of Hilbert scalar products
\begin{equation}
\label{e:dec}
\begin{split}
I_{\eps;T}(u)   =  &
\frac{\eps^{-1}}{2} \Big\|\partial_t u + \partial_x f(u)
         + \frac{\eps}2(-\partial_{xx})^s u
          \Big\|_{\mc H_T^*}^2
\\
  =&
\frac{\eps^{-1}}{2} \Big\|\partial_t u + \partial_x f(u)
         - \frac{\eps}2(-\partial_{xx})^s u
          \Big\|_{\mc H_T^*}^2
\\  &
 + \Big( \partial_t u, 
         (-\partial_{xx})^s u
          \Big)_{\mc H_T^*} +  \Big( \partial_x f(u), 
         (-\partial_{xx})^s u
          \Big)_{\mc H_T^*}
\end{split}
\end{equation}
Now note that, by the same arguments as in Lemma~\ref{l:ext1},
integration by parts are allowed and
\begin{equation}
\label{e:orth}
\begin{split}
\Big( \partial_x f(u), (-\partial_{xx})^s u\Big)_{\mc H_T^*} =& 
 \Big( \partial_x f(u), 
         u -m\Big)_{L_2([0,T]\times \bb T)} 
 \\ = & \int_0^T \int_{\bb T} \partial_x q(u(t,x))\,dx\,dt =0
\end{split}
\end{equation}
where $q\in C^1(\bb R)$ is such that $q'(v)=(v-m)f'(v)$. On the other
hand
\begin{equation}
\label{e:orth2}
\begin{split}
\Big( \partial_t u, 
         (-\partial_{xx})^s u
          \Big)_{\mc H_T^*} = & \Big( \partial_t u, 
         u -m\Big)_{L_2([0,T]\times \bb T)}  
\\= &
\int_0^T \int_{\bb T} (u(t,x)-m) \partial_t u(t,x)\,dx\,dt
\\  = &
 \frac 12 \int_0^T \int_{\bb T} \partial_t (u(t,x)-m)^2 \,dx\,dt 
               = \frac 12 \|w-m\|_{L_2(\bb T)}^2
\end{split}
\end{equation}
Patching \eqref{e:dec}, \eqref{e:orth}, \eqref{e:orth2} together, the
result follows.
\end{proof}

\begin{proof}[Proof of Lemma~\ref{l:lbve}]
  Let $v:[0,\infty[\times \bb T \to \bb R$ be the solution to
  \eqref{e:1.2} with initial datum $v(0,x)=w(-x)$, and for
  $T_1,\,T_2>0$ let $u \in \mc X_{T_1+T_2}$ be defined as
\begin{equation*}
u(t,x)=
\begin{cases}
(1-\frac t{T_1})m + \frac t{T_1} v(T_2,-x)  & \text{for $t\in [0,T_1]$}
\\
v(T_1+T_2-t,-x) & \text{for $t\in [T_1,T_1+T_2]$}
\end{cases}
\end{equation*}
Note that $u(0,x)=m$ and $u(T,x)=w(x)$ for $T=T_1+T_2$, so that we are
left with the proof of \eqref{e:resto}.

Since $u$ satisfies $\partial_t u+\partial_x f(u)-\frac{\eps}2
(-\partial_{xx})^s u=0$ for $t\in [T_1,T_1+T_2]$, while calculations
are explicit for $t\in [0,T_1]$
\begin{equation}
\label{e:spaccato}
\begin{split}
&
\frac{\eps^{-1}}{2} \big\|\partial_t u + \partial_x f(u)
         - \frac{\eps}2(-\partial_{xx})^s u
          \big\|_{\mc H_{T_1+T_2}^*}^2
\\
&
\qquad =\frac{\eps^{-1}}{2} \big\|\partial_t u + \partial_x f(u)
         - \frac{\eps}2(-\partial_{xx})^s u
          \big\|_{\mc H_{T_1}^*}^2 
\\
&
\qquad \le  \frac{3 \eps^{-1}}{2} \big[\big\|\partial_t u
          \big\|_{\mc H_{T_1}^*}^2 
         + \big\| \partial_x f(u)
          \big\|_{\mc H_{T_1}^*}^2 
         + \big\|\frac{\eps}2(-\partial_{xx})^s u
          \big\|_{\mc H_{T_1}^*}^2 
\big]
\\ 
& \qquad \le
 \frac{3\eps^{-1}}2 \big\|v(T_2)-m \big\|_{\dot H^{-s}(\bb T)}^2
 + \frac{3\eps^{-1}}2 \big\| f'(u) \partial_x u
          \big\|_{\mc H_{T_1}^*}^2 
 + \frac{3\eps\,T_1}{16} \big\|v(T_2)\big\|_{\dot H^{s}(\bb T)}^2
\end{split}
\end{equation}
Now note that if $\omega_1,\,\omega_2 \in \dot H^{-s}(\bb T)$ are such
that
\begin{equation*}
\int_{\bb T} \omega_1(x)\,dx=0 
       \qquad 
\int_{\bb T} \omega_1(x)\,\omega_2(x) dx=0
\end{equation*}
then
\begin{equation}
\label{e:emb1}
\|\omega_1\|_{\dot H^{-s}(\bb T)}^2 \le \|\omega_1\|_{\dot H^{s}(\bb T)}^2
\end{equation}
\begin{equation}
\label{e:emb2}
\|\omega_1\,\omega_2\|_{\dot H^{-s}(\bb T)}^2 
    \le \|\omega_1\|_{\dot H^{-s}(\bb T)}^2
       \,\|\omega_2\|_{L_{\infty}(\bb T)}^2
\end{equation}
Applying \eqref{e:emb1} to the first term of the last line of
\eqref{e:spaccato}; applying \eqref{e:emb2} integrated over $t\in
[0,T_1]$ to the second term in the last line of \eqref{e:spaccato}
with $\omega_1=\partial_x u(t)$ and $\omega_2=f'(u(t))$, one gets
\begin{equation}
\begin{split}
\label{e:spaccato2}
& \frac{\eps^{-1}}{2} \big\|\partial_t u + \partial_x f(u)
         - \frac{\eps}2(-\partial_{xx})^s u
          \big\|_{\mc H_{T_1+T_2}^*}^2
 \\ & \qquad  \le C\,\big( \|\partial_x v(T_2)\|_{\dot H^{-s}(\bb T)}^2 
           +  \|v(T_2)\|_{\dot H^{s}(\bb T)}^2\big)
              \le 2\,C\,  \|v(T_2)\|_{\dot H^{s}(\bb T)}^2
\end{split}
\end{equation}
where $C$ is a constant depending only on $\eps,\,T_1,\,f$, and we
used $\|\partial_x v(T_2)\|_{\dot H^{-s}} \le \| v(T_2)\|_{\dot
  H^{s}}$ as $s>1/2$. Now note that by a standard parabolic estimate
(indeed by \eqref{e:bound1} calculated for $I_{\eps,T}(u)=0$)
\begin{equation*}
     \int_0^{+\infty} \|v(t)\|_{\dot H^{s}(\bb T)}^2 
\le \|w\|_{L_2(\bb T)}^2<+\infty
\end{equation*}
Therefore for each $\gamma>0$, there exists $T_2$ large enough such
that the rightest hand side of \eqref{e:spaccato2} is smaller than
$\gamma$.
\end{proof}

 \textit{Acknowledgment.} The author M.M.  acknowledges ANR "SHEPI", grant ANR-2010-BLAN-0108. The author Y. S. acknowledges the ANR "PREFERED".


\begin{thebibliography}{63}

\bibitem{BBMN}\textsc{Bellettini G., Bertini L., Mariani M., Novaga
    M.}, $\Gamma$-Entropy Cost for Scalar Conservation Laws, Arch.\
    Rational Mech.\ Anal.\ 195 (2010) 261--309.

  \bibitem{BCM}\textsc{Bellettini G., Caselli F., Mariani M.},
    Quasi-potentials of the entropy functionals for scalar
    conservation laws, J. Funct. Anal. 258 (2010), no. 2, 534--558.


  \bibitem{Br} \textsc{Braides A.}, \textit{$\Gamma$-convergence for
      beginners.}  Oxford University Press, Oxford, 2002.
    
  \bibitem{CS} \textsc{Caffarelli, L., Silvestre L.}, An extension
    problem related to the fractional Laplacian, Comm.\ Partial
    Differential Equations 32, no. 7-9, (2007).

%\bibitem{CF} \textsc{Chen G.-Q., Frid H.}, On the Theory of
%  Divergence-Measure Fields and Its Applications,
%  \textit{Bol. Soc. Bras. Math.} \textbf{32}, pages 1-33 (2001)

\bibitem{Da} \textsc{Dafermos C.M.}, \textit{Hyperbolic conservation
    laws in continuum physics. Second edition.}  Springer-Verlag,
  Berlin, 2005.

\bibitem{DI} \textsc{Droniou J., Imbert C.}, Fractal first order
  partial differential equations, \textit{Arch. Ration. Mech. Anal.}
  (to appear).

\bibitem{DOW} \textsc{De Lellis C., Otto F., Westdickenberg M.},
  Structure of entropy solutions for multi-dimensional scalar
  conservation laws.  \textit{Arch. Ration. Mech. Anal.} \textbf{170},
  137--184 (2003).

\bibitem{LM} \textsc{Lions J.-L., Magenes E.}, \textit{Non-homogeneous
    boundary value problems and applications. Vol. II.}
  Springer-Verlag, New York-Heidelberg, 1972.

\bibitem{Mauro} \textsc{Mariani, M.},
Large deviations principles for stochastic scalar conservation laws.
Probab.\ Theory Related Fields 147 (2010), no. 3-4, 607--648. 


\bibitem{Nbi} \textsc{Nekvinda A.}, Characterization of traces of the
  weighted Sobolev space $W^{1,p}(\Omega,d^\epsilon_M)$ on $M$, M.\
  Czechoslovak Math.\ J.\ 43, no. 4, 695--711 (1993).

\bibitem{Pe} \textsc{Perthame B.},  \textit{Kinetic formulation of
    conservation laws.}  Oxford Lecture Series in Mathematics and its
  Applications, 21. Oxford University Press, Oxford, 2002.

\bibitem{Se} \textsc{Serre D.}, \textit{Systems of conservation
    laws. Vol.\ 1 and 2.}  Cambridge University Press, Cambridge, 1999.
\bibitem{Spohn}
\textsc{Spohn H.}, \textit{Large scale dynamics of interacting particles.}  
Springer-Verlag, Berlin, 1991.

\bibitem{Ragu}
\textsc{Varadhan S.R.S.}:  
Large deviations for the simple asymmetric exclusion process.
Stochastic analysis on large scale interacting systems, 
\textit{Adv. Stud. Pure Math.} \textbf{39}, 1--27 (2004).
\end{thebibliography}
\end{document}